\documentclass[preprint]{elsarticle}
 \oddsidemargin 0in \textwidth 6.0in \textheight 8.0in
\usepackage{lineno}
\modulolinenumbers[5]

\journal{J. Comput. Appl. Math.}









\bibliographystyle{elsarticle-num}

\usepackage{amsmath}
\usepackage{amssymb}
\usepackage{amsthm}
\usepackage{graphicx}
\usepackage{epic,eepic,epsfig}
\usepackage{color}
\usepackage{subfigure}  
\usepackage{placeins}   
\usepackage{multirow}
\usepackage{epstopdf}
\usepackage{varwidth}
\usepackage{float}
\usepackage[table]{xcolor}

\pagestyle{plain}

\newtheorem{theorem}{Theorem}[section]
\newtheorem{example}{Example}[section]
\newtheorem{lemma}{Lemma}[section]
\newtheorem{remark}{Remark}[section]

\newtheorem{algorithm}{Algorithm}[section]

\setlength{\tabcolsep}{8pt}

\definecolor{tabclr}{cmyk}{0,0,1,0}

\newcommand{\bx}{\boldsymbol{x}}

\newcommand{\balpha}{\boldsymbol{\alpha}}



\newcommand{\Rmnum}[1]{\uppercase\expandafter{\romannumeral #1}} 

\begin{document}

\title{Adaptive-Multilevel BDDC algorithm for three-dimensional plane wave Helmholtz systems}

\author[SCNU]{Jie Peng\corref{cof}}
\ead{pengjie18@m.scnu.edu.cn}

\author[XTU1,XTU2]{Shi Shu\corref{cof}}
\ead{shushi@xtu.edu.cn}

\author[XTU1,XTU2]{Junxian Wang\corref{cor}}
\ead{wangjunxian@xtu.edu.cn}

\author[SCNU]{Liuqiang Zhong}
\ead{zhong@scnu.edu.cn}

\cortext[cof]{These authors contributed equally to this work and should be considered co-first authors}
\cortext[cor]{Corresponding author}
\address[SCNU]{School of Mathematical Sciences, South China Normal University, Guangzhou 510631, China}
\address[XTU1]{School of Mathematics and Computational Science, Xiangtan University, Xiangtan 411105, China}
\address[XTU2]{Hunan Key Laboratory for Computation and Simulation in Science and Engineering,
Xiangtan University, Xiangtan 411105, China}

\begin{abstract}
In this paper, we are concerned with the weighted plane wave  least-squares (PWLS) method for three-dimensional Helmholtz equations, and develop the multi-level adaptive BDDC algorithms for solving the resulting discrete system.
In order to form the adaptive coarse components, the local generalized eigenvalue problems for each common
face and each common edge are carefully designed.
The condition number of the two-level adaptive BDDC preconditioned system is proved to be bounded above by a
user-defined tolerance and a constant which is dependent on the maximum number of faces and edges per subdomain and the number of subdomains sharing a common edge.
The efficiency of these algorithms is illustrated on a benchmark problem.
The numerical results show the robustness of our two-level adaptive BDDC algorithms with respect to the wave number, the number of subdomains and the mesh size,
and illustrate that our multi-level adaptive BDDC algorithm can reduce the scale of the coarse problem and can be used to solve large wave number problems efficiently.
\end{abstract}

\begin{keyword}
Helmholtz equation, plane wave discretization, BDDC, Adaptive constraints, Multilevel algorithms
\MSC[2010] 65N30 \sep 65F10 \sep 65N55
\end{keyword}

\maketitle

\section{Introduction}

The plane wave method is an important class of Trefftz method \cite{HiptmairMoiola16:237} for solving the Helmholtz equations with high wave numbers. Compared with the traditional finite element methods (FEMs), the plane wave methods attract people's attention mainly for two reasons:
(i) they need fewer degrees of freedom (dofs) with the same precision requirements;
(ii) their basis functions naturally satisfy the unconstrained homogenous Helmholtz equations without considering boundary conditions.
In addition, compared with boundary element methods (BEMs), an advantage of the plane wave methods is that they do not require the evaluation of singular integrals. Examples of this method include ultra weak variational formulation (UWVF)\cite{CessenatDespres98:255,GamalloAstley07:406},
 variational theory of complex rays (VTCR)\cite{KovalevskyLadeveze12:142,LadevezeArnaud01:193}, weighted
plane wave least squares (PWLS) method \cite{MonkWang99:121,HuYuan14:587,HuYuan18:245}, plane wave discontinuous Galerkin method (PWDG) \cite{GittelsonHiptmair09:297,HiptmairMoiola11:264} and so on.

The variational problem of the PWLS method is derived from a quadratic functional minimization problem.
Compared with other plane wave methods, the advantage is that the coefficient matrix of the corresponding discrete system is Hermitian and  positive definite and can be solved by the preconditioned conjugate gradient (PCG) method.
As early as 1999, Monk and Wang first proposed the PWLS method for solving Helmholtz equations \cite{MonkWang99:121}.
After that, inspired by VTCR method, Hu and Yuan proposed a weighted PWLS method and established the corresponding error estimation theory.
But it should be pointed out that this work is only applied to the case of homogeneous equation.
Recently, Hu and Yuan extended the PWLS method to the non-homogeneous case by combining local spectral element method in \cite{HuYuan18:245}.
Although the PWLS discrete system of the Helmholtz equation with high wave numbers is Hermitian positive definite, its coefficient matrix is still highly ill-conditioned \cite{HiptmairMoiola16:237}. Specifically, the condition number of the coefficient matrix will deteriorate sharply with the decrease of the mesh size or the increase of the number of plane wave basis functions in each element.
In addition, compared with the traditional FEMs, the plane wave basis function is defined on the element rather than on the nodes or edges, which results in failure of the existing fast algorithms applied to such systems directly. Therefore, it is very challenging to design a preconditioned algorithm for it.

Domain decomposition method (DDM) is a popular method to construct efficient preconditioners \cite{ToselliWidlund05Book}.
With the development of modern computer parallel architecture, this method has become a powerful tool for numerical simulation of complex practical problems. How to construct efficient parallel DDM has become one of the hot research fields of the current scientific computing.
The traditional DDMs for solving the Helmholtz equations with high wave numbers face the problem that the well-posedness of local problem can not be guaranteed, which results in that the traditional algorithms for positive definite problems, such as the traditional Schwarz method, Neumann-Neumann method, Finite Element Tearing and Interconnecting (FETI) and Dual-Primal Finite Element Tearing and Interconnecting (FETI-DP) method, can not be directly applied to solve this kind of indefinite problem.

In order to overcome this difficulty, some improved methods have been proposed. For example,
Farhat et al. proposed the FETI-H and FETI-DPH methods by using regularization technique and combining with the coarse space constructed by plane wave functions \cite{FarhatMacedo99:231,FarhatAvery05:499}; Gander, Magoules and Nataf proposed the optimal Schwarz algorithm by improving the transmission boundary conditions and the selection of the optimal parameters \cite{GanderMagoules02:38}, and based on this work, Gander, Halpern and Magoules also proposed
an optimized Schwarz method with two-sided Robin transmission conditions in \cite{GanderHalpern07:163};
Chen, Liu and Xu proposed a kind of two-parameter relaxed Robin DDM by choosing appropriate Robin parameters and relaxation parameters in \cite{ChenLiu16:921}; In addition, inspired by the sweeping preconditioner (an approximation preconditioner) with optimal computational complexity which is proposed by Engquist and Ying  \cite{EngquistYing11:686}, Chen and Xiang designed a source transfer DDM \cite{ChenXia13:538}.

BDDC (Balancing Domain Decomposition by Constraints) method which was first proposed by Dohrmann for structural mechanics problems \cite{Dohrmann03:246} is an important non-overlapping DDM.
Based on the principle of energy minimization by constraints, Mandel first derived the convergence theory of the BDDC method in \cite{MandelDohrmann03:639}, and proved that the condition number of its preconditioned system is $C(1 + log^2(H/h))$, where $h$ and $H$ represent the size of mesh and subdomain respectively.
Later, this method has been widely used to solve various PDE(s) models, such as scalar diffusion problem \cite{MandelDohrmann03:639},
linear elasticity problem \cite{GippertKlawonn12:2208}, almost incompressible elasticity problem \cite{PavarinoWidlund10:3604},
Helmholtz problem \cite{LiTu09:745,TuLi09:75}, Stokes flow problem \cite{SistekSousedik11:429}, porous media flow problem \cite{TuXM07:146,ZampiniTu17:A1389},
isogeometric analysis \cite{DaPavarino14:A1118},  etc..
In particular, a robust BDDC method is designed by adding plane wave continuity constraints for solving the FEM discrete system of the Helmholtz equation with constant wave numbers in \cite{LiTu09:745,TuLi09:75}.
However, when the PDE(s) model contains strongly discontinuous or highly oscillating coefficients,
the BDDC method with the standard coarse space may not converge any more \cite{KimChung15:571}.
Therefore, it is particularly important to select coarse space adaptively according to the characteristics of the problem.

The adaptive BDDC method is an advanced BDDC method, for which the primal unknowns are always selected by solving some local generalized eigenvalue problems \cite{MandelSousedik07:1389}.
Since the condition number of the corresponding preconditioned system is controlled by a user defined tolerence,
it has attracted extensive attention of many scholars and has been successfully extended to FEMs \cite{KimChung15:571,KimChung18:64,KlawonnRadtke16:301,OhWidlund18:659,ZampiniVassilevski17:103}, mortar methods \cite{PengShu18:185}, staggered discontinuous Galerkin methods \cite{KimChung17:599}
and isogeometric analysis \cite{DaVeigaPavarino17:A281} and so on.
However, most of the available literatures
on adaptive BDDC algorithms aimed at some real symmetric positive definite systems and were presented in algebraic form (i.e. matrices and vectors),
and adaptive BDDC
algorithms for PWLS discrete systems (Hermitian positive definite and highly ill-conditioned) of three-dimensional Helmholtz equation with high wave numbers in variational form have not previously been discussed in the literature.
It is worth pointing out that the variational form is as popular as the matrix representation for describing and analyzing BDDC and adaptive BDDC methods \cite{BrennerSung07:1429,KlawonnRadtke16:301}.
Therefore, how to construct scalable and efficient adaptive BDDC preconditioners in variational form for such discrete systems and establish relevant theories is still a work worthy of further study.

Based on our earlier work on adaptive BDDC algorithms in variational form for the PWLS discretization of the Helmholtz problem in two-dimension \cite{PengWang18:683}, we extend these algorithms to three-dimensional problems
and establish the corresponding condition number estimation theory.
For the three-dimensional PWLS Helmholtz system studied in this paper,
the local generalized eigenvalue problems are formed for each common face and each common edge respectively.
To be more specific, the common face is an equivalence class shared by two subdomains and thus the generalized eigenvalue problem is identical to that considered for two-dimensional problems in \cite{PengWang18:683}, and the common edge is an equivalence class shared by more than two subdomains and thus a different idea is required to form an appropriate generalized eigenvalue problem.
Inspired by the estimate of condition numbers of the preconditioned matrix, the local generalized eigenvalue problem on each common edge is designed carefully.
In addition, though the condition numbers can be controlled by a user-defined tolerance, the cost for forming the generalized eigenvalue problems is quite considerable \cite{Zampini16:S282}, especially for three-dimensional problems. Thus similar to \cite{KlawonnRadtke16:75}, we use economic-version to enhance the efficiency of the proposed method.
Further, we extend the applicability of these methods to the case of high wave numbers with a specific focus on the multilevel extension.
Since the number of primal unknowns increases as the wave number or the number of subdomains increases,
we attempt to construct a multilevel adaptive BDDC algorithm to resolve the bottleneck in solving large-scale coarse
problem.
Finally, we perform numerical experiments for a benchmark problem.
These results verify the correctness of theoretical
results and show the efficiency of our two-level adaptive BDDC algorithms with respect to the angular frequency, the
number of subdomains, and mesh size. And the numerical results also show that the multi-level adaptive BDDC algorithm is effective for reducing the number of dofs in the coarse problem,
and can be used to solving large wave number problems efficiently.

This paper is organized as follows. In Section 2, a brief introduction to the PWLS method for three-dimensional Helmholtz equations is presented.
In Section 3, a two-level BDDC preconditioner with adaptive coarse space is
proposed, and then the multilevel extension of these methods is carried out.
The condition number analysis  is provided in Section 4 and various numerical experiments are presented in Section 5.
Conclusions will be given in section 6.

\section{Problem formulation}\label{sec:2}
\setcounter{equation}{0}

In this section, we briefly review the weighted plane wave least squares formulation for the Helmholtz equation.

\subsection{Model problem}

Let $\Omega \in \mathbb{R}^3$ be a bounded and connected Lipschitz domain,
the boundary of $\Omega$ is given as
$$\partial \Omega = \overline{\Gamma_d \cup \Gamma_n \cup \Gamma_r},$$
where $\Gamma_d, \Gamma_n$ and $\Gamma_r$ are disjoint sets.
Consider the following Helmholtz equation with general boundary condition
\begin{equation}\label{model equation}
\left\{
\begin{array}{rcll}
 -\Delta u  - \kappa^2 u &=&  0        &in~ \Omega,\\
      u &=& g_d & on ~\Gamma_d,\\
      \partial_{\bf n} u &=& g_n & on~\Gamma_n,\\
     (\partial_{\bf n} + i \kappa)u &=&  g_r    &    on~ \Gamma_r,
\end{array}
\right.
\end{equation}
where $i = \sqrt{-1}$ is the imaginary unit, the operator $\partial_{\bf n}$ is the outer normal derivative,
and $\kappa = \omega/c > 0$ is the wave number. Here $\omega$ is called the angular frequency and $c$ is the wave speed.

\subsection{Weighted plane wave least squares discretization}

Let $\Omega$ be divided into a partition as follows
$$
\bar{\Omega} = \bigcup\limits_{k=1}^{N_h} \bar{\Omega}_k,
$$
where the hexahedron elements $\{\Omega_k\}$ satisfy that $\Omega_m \cap \Omega_l = \emptyset, m \neq l$, $h_k$ is the size of $\Omega_k$ and $h = \max\limits_{1\le k \le N_h} h_k$.
Define
\begin{equation*}
\begin{array}{l}
\gamma_{kj} = \partial \Omega_k \cap \partial \Omega_j,~~ \mbox{for}~k,j = 1,\cdots,N_h ~\mbox{and}~ k\neq j, \\
\gamma_k = \partial \Omega_k \cap \partial \Omega,~~\mbox{for}~k=1,\cdots,N_h,
\end{array}
\end{equation*}
and let
\begin{equation}\label{def-FB-FI}
\mathcal{F}_B = \bigcup\limits_{k=1}^{N_h} \gamma_k,~~\mathcal{F}_I = \bigcup\limits_{k \neq j} \gamma_{kj}.
\end{equation}

In this paper, we assume that each
$\kappa_k:= \kappa|_{\Omega_k}$ is a constant. $V(\Omega_k)$ is denoted as the space of the functions which satisfies the
homogeneous Helmholtz's equation \eqref{model equation} on the cavity $\Omega_k$:
\begin{equation*}
V(\Omega_k) = \{v_k \in H^1({\Omega_k}):~\Delta v_k + \kappa_k^2 v_k = 0\},~k=1,\cdots,N_h.
\end{equation*}
Define
$$
V({\mathcal{T}_h}) = \bigcup_{k=1}^{N_h} V(\Omega_k).
$$

The problem \eqref{model equation} to be solved consists in finding $u_k: = u|_{\Omega_k} \in \{v \in H^1({\Omega_k}):~\nabla v \in H(div;\Omega_k)\}$
such that
\begin{equation}\label{submodel}
\left\{
\begin{array}{rcll}
    -\Delta u_k  -\kappa_k^2 u_k &=&  0    & in ~\Omega_k, \\
      u_k &=& g_d & on ~\partial \Omega_k \cap \Gamma_d,\\
      \partial_{\bf n} u_k &=& g_n & on~\partial \Omega_k \cap  \Gamma_n,\\
     (\partial_{\bf n} + i \kappa)u_k &=&  g_r    &    on~ \partial \Omega_k \cap  \Gamma_r,
\end{array}
\right. k=1,2,\cdots,N_h,
\end{equation}
and
\begin{equation}\label{interface condition}
\left\{
\begin{array}{rcll}
                              u_k  - u_j   &=&  0    &   over ~\gamma_{kj}\\
     \partial_{{\bf n}_k}u_k +  \partial_{{\bf n}_j}u_j&=&  0    &   over ~\gamma_{kj}
\end{array}
\right. ~~~k,j = 1,\cdots,N_h~and~k\neq j.
\end{equation}

The variational problem associated with the plane wave least squared  approximation of problem \eqref{submodel} and \eqref{interface condition} can be expressed as: find $u\in V(\mathcal{T}_h)$ such that
\begin{eqnarray}\label{115-variational-form}
   a(u,v)=  \mathcal{L}(v),~\forall v\in V({\mathcal T_h}),
\end{eqnarray}
where
\begin{align}\nonumber
  a(u,v) &= \sum_{k=1}^{N_h} \left(\theta_{k1} \int_{\gamma_k \cap \Gamma_d} u_k \cdot \overline{v_k} ds
          + \theta_{k2} \int_{\gamma_k \cap \Gamma_n} \partial_{{\bf n}_k} u_k \cdot \overline{\partial_{{\bf n}_k} v_k} ds \right.\\\nonumber
         &~~~~~~~~\left.+ \theta_{k3} \int_{\gamma_k \cap \Gamma_r} ((\partial_{{\bf n}_k}+i\kappa_k )u_k) \cdot \overline{(\partial_{{\bf n}_k}+i \kappa_k)v_k}ds\right)\\\nonumber
         &+ \sum_{j\neq k} \left(\alpha_{kj}\int_{\gamma_{kj}} (u_k-u_j)\cdot\overline{(v_k-v_j)}ds \right.\\\label{115-1}
         &~~~~~~~~\left.+ \beta_{kj}\int_{\gamma_{kj}} ({\partial_{{\bf n}_k}u_k+\partial_{{\bf n}_j}u_j})\cdot\overline{(\partial_{{\bf n}_k}v_k+\partial_{{\bf n}_j}v_j)}ds\right),~~\forall u,v \in V(\mathcal{T}_h)\\\nonumber
\mathcal{L}(v) &= \sum_{k=1}^{N_h} \left(\theta_{k1} \int_{\gamma_k \cap \Gamma_d} g_d \cdot \overline{v_k} ds
                + \theta_{k2} \int_{\gamma_k \cap \Gamma_n} g_n \cdot \overline{\partial_{{\bf n}_k} v_k} ds \right.\\\label{115-2}
               &~~~~~~~~\left.+ \theta_{k3} \int_{\gamma_k \cap \Gamma_r} g_r \cdot \overline{(\partial_{{\bf n}_k}+i \kappa_k)v_k}ds\right),~~\forall v \in V(\mathcal{T}_h),
\end{align}
here $\overline{\diamond}$ denotes the complex conjugate of the complex quantity $\diamond$, the Lagrange multipliers
$$\alpha_{kj} = h^{-1} + |\kappa_{kj}|,~\beta_{kj} = h^{-1} |\kappa_{kj}|^{-2} + |\kappa_{kj}|^{-1}~ \mbox{with}~\kappa_{kj} = (\kappa_k + \kappa_j)/2,$$
and
$$\theta_{k1} = h^{-1} + |\kappa_k|,~ \theta_{k2} = \theta_{k3} = h^{-1} |\kappa_k|^{-2} + |\kappa_k|^{-1}.$$

It is clear that $a(\cdot,\cdot)$ is sesquilinear, and similar to  the proof of Theorem 3.1 in \cite{HuYuan14:587}, we can see
that
$a(\cdot,\cdot)$ is Hermitian positive definite on $V(\mathcal{T}_h)$.

\subsection{Discretization of the variational formulation}

In this subsection, we derive a discretization of the variational formulation \eqref{115-variational-form}.

Let $p \ge 1$ be a given positive integer, $y_{m,l}(l=1,\cdots,p)$ be the wave shape functions on $\Omega_m (m=1,\cdots,N_h)$, which satisfy
\begin{equation*}
\left\{
\begin{array}{rcl}
 y_{m,l}(\bx) &=& e^{i\kappa(\bx \cdot\balpha_l)},~\bx \in \bar{\Omega}_m,\\
 |\balpha_l| &=& 1, \\
   \balpha_l &\neq& \balpha_s,~\mbox{for}~l \neq s,
\end{array}
\right.
\end{equation*}
where $\balpha_l$ $(l = 1,\cdots,p)$ are unit wave propagation directions.
In particular, during numerical simulations, we set
\begin{equation*}
\balpha_l := \balpha_{r,j} = \left(
  \begin{array}{c}
    \cos(2\pi(r-1)/n_1) \cos(\pi(j-1)/n_2)\\
    \cos(2\pi(r-1)/n_1) \sin(\pi(j-1)/n_2)\\
    \sin(2\pi(r-1)/n_1)\\
  \end{array}
\right),~r=1,\cdots,n_1, j=1,\cdots,n_2,
\end{equation*}
where $l = (j-1)n_1 + r$, $n_1, n_2$ are two positive integers, and we choose $n_1$ according to the rules in \cite{HuLi17:1242} that when $n_2$ is odd, we set $n_1 = 2n_2 - 1$ or $2n_2$ or $2n_2+1$; when $n_2$ is even, we set $n_1 = 2n_2-1$ or $2n_2+1$.

Thus we can define a finite dimensional subspace of  $V({\mathcal T_h})$ as
\begin{equation*}
V_p({\mathcal T_h}) =\mathrm{span}\{\varphi_{m,l}:~1 \le l \le p, 1\le m \le N_h\},
\end{equation*}
where
\begin{equation*}
\varphi_{m,l}(\bx)=
\left\{
\begin{array}{ll}
y_{m,l}(\bx)     &\bx \in \bar{\Omega}_m,\\
     0          &\bx \in \Omega \backslash \bar{\Omega}_m.
\end{array}
\right.
\end{equation*}

For ease of notations, we denote $\{\varphi_{m,l}\}$ briefly by $\{\varphi_s\}$, where $s = (m-1)p + l$.
Define $\mathcal{S}_h = \{1,\cdots,dim(V_p(\mathcal{T}_h))\}$ as the number set of dofs.

Let $V_p(\mathcal{T}_h)$ be the plane wave finite dimensional space defined above. Then the discrete variational
problem associated with \eqref{115-variational-form} can be described as follows: find $u \in V_p({\mathcal T_h})$ such that
\begin{eqnarray}\label{115-dis}
a(u,v) =  \mathcal{L}(v),~\forall v \in V_p({\mathcal T_h}),
\end{eqnarray}
where $a(\cdot,\cdot)$ and $\mathcal{L}(\cdot)$  are separately defined in \eqref{115-1} and \eqref{115-2}.

Note that the above system is large and highly ill-conditioned when the
wave number is large,
therefore, a fast solver for \eqref{115-dis} will be discussed in the rest of this paper.

\section{Adaptive BDDC algorithms}\label{sec:3}
\setcounter{equation}{0}

\subsection{Globs}

{\bf Globs} (or {\bf Equivalence classes})  \cite{PechsteinDohrmann17:273} of  all dofs play a very important role in design, analysis and parallel implementation of the BDDC methods.
An important step in designing a non-overlapping domain decomposition method is to classify all dofs.
Different from the discretizations which dofs are defined on the vertices or edges of the mesh,
the dofs in the PWLS discretization are defined on the elements;
therefore, to classify all the dofs,  we need to introduce a non-overlapping domain decomposition and special interface which is similar to the fat interface in \cite{DaPavarino14:A1118}.

Let $\mathcal{T}_d = \{D_r\}_{r=1}^{N_d}$ be a non-overlapping subdomain partition of $\Omega$, where each $D_r$ consists of several complete elements and part
of the elements in $\mathcal{T}_h$ (Fig. \ref{chap4-fig-dd-1} shows the cross-section of the elements and subdomains).

\begin{figure}[H]
  \centering
  \includegraphics[width=0.3\textwidth]{./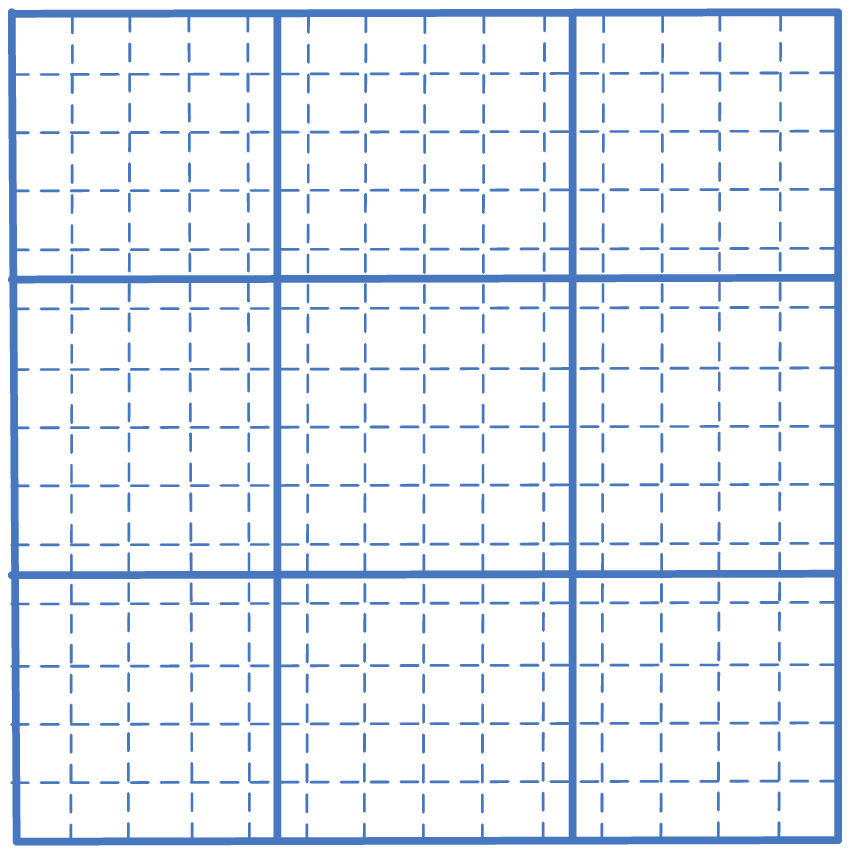}\\
  \caption{The cross-section of the elements:  the small square with dotted line boundary; the cross-section of the subdomains: the square with solid line boundary.}\label{chap4-fig-dd-1}
\end{figure}

Denote $\mathcal{S}_d := \{1,\cdots,N_d\}$. For each global dof indice $k \in \mathcal{S}_h$, we define the set of subdomain indices sharing the dof $k$ as
\begin{eqnarray*}
\mathcal{N}_k = \{r \in \mathcal{S}_d:~ \mu_k^{(r)} = 1\},
\end{eqnarray*}
where
\allowdisplaybreaks
\begin{eqnarray*}
\mu_k^{(r)} = \left\{
\begin{array}{ll}
1, & \mathrm{supp}(\varphi_k) \cap D_r \neq \emptyset,\\ 
0, & \mbox{others},
\end{array}
\right.
\end{eqnarray*}
here~$\mathrm{supp}(\varphi_k)$ denotes the support of the basis function $\varphi_k$.

Using~$\{\mathcal{N}_k\}$, we can decompose $\mathcal{S}_h$ into some globs.
Specifically, the global dof indices $k$ and $k'$ belong to the same glob when $\mathcal{N}_k = \mathcal{N}_{k'} (k\neq k')$.
Denote~$\mathcal{G}$ as the set of all globs, and $\mathcal{N}_G$ as the set of subdomain indices associated with glob $G$, then the globs associated with $D_r$ can be expressed as
\begin{eqnarray*}
\mathcal{G}_r :=\{G \in \mathcal{G}:~r\in \mathcal{N}_G\}, ~r = 1,\cdots,N_d.
\end{eqnarray*}

Let $|\diamond|$ denote the size of the set $\diamond$.
If~$|\mathcal{N}_G| = 1$ and $\mathcal{N}_{G}:=\{r\}$, we call~$G$ the set of global dof indices in $D_r$ (specified as $\mathcal{I}_r$), and denote the set of global dof indices in the interior of all subdomains by $\mathcal{I}:=\cup_{r=1}^{N_d}\mathcal{I}_r$.
If~$|\mathcal{N}_G| = 2$ and $\mathcal{N}_{G}:=\{r,j\}$, we call~$G$ the set of global dof indices in the common face of $D_r$ and $D_j$ (specified as $\mathcal{F}_k$, $n_{F_k} := |\mathcal{F}_k|$, and $k$ is the common face indice), and denote the set of global dof indices in all the common faces by $\mathcal{F}:= \cup_{k=1}^{N_F}\mathcal{F}_k$, where $N_F$ is the number of the common faces.
If~$|\mathcal{N}_G| = 4$ and $\mathcal{N}_{G}:=\{r,l,m,n\}$, we call~$G$ the set of global dof indices in the common edge of $D_r$, $D_l$, $D_m$ and $D_n$ (specified as $\mathcal{E}_k$, $n_{E_k} := |\mathcal{E}_k|$, and $k$ is the common edge indice), and denote the set of global dof indices in all the common edges by $\mathcal{E}:= \cup_{k=1}^{N_E}\mathcal{E}_k$, where $N_E$ is the number of the common edges.
Further, if~$|\mathcal{N}_G| > 4$, then we call~$G$ the set of global dof indices in the common vertex (specified as $\mathcal{V}_k$, $n_{V_k} := |\mathcal{V}_k|$, and $k$ is the common vertex indice),
and denote the set of global dof indices in all the common vertices by~$\mathcal{V}:=\cup_{k=1}^{N_V}\mathcal{V}_k$, where~$N_V$ is the number of the common vertices.
In addition, let the set of global dof indices on the interface be~$\mathcal{F} \cup \mathcal{E} \cup \mathcal{V}$.

From the above description, we can see that $\mathcal{N}_{\mathcal{F}_k}$, $\mathcal{N}_{\mathcal{E}_k}$ and~$\mathcal{N}_{\mathcal{V}_k}$ denote the set of the subdomain indices sharing by the $k$-th common face, the $k$-th common edge and the $k$-th common vertex, respectively.
For simplicity, we denote the $k$-th ($k=1,\cdots,N_F$) common face as $F_k$, the $k$-th $(k=1,\cdots,N_E)$ common edge as $E_k$, and the $k$-th $(k=1,\cdots,N_V)$ common vertex as $V_k$.

For each subdomain $D_r$, define
\begin{align}\label{chap2-def-M-i}
\mathcal{M}_{X}^{(r)}   &= \{k:~\mathcal{X}_k \subset \mathcal{G}_r,~\mbox{for}~1\le k \le N_{X}\}, ~X = F, E, V.
\end{align}
It must be pointed out that the notation $\mathcal{X}_k (\mathcal{X}=\mathcal{F}, \mathcal{E}, \mathcal{V})$ is the glob which is associated with $X_k (X=F, E, V)$ in \eqref{chap2-def-M-i} and the remainder of this paper.
From this definition, we can see that $\mathcal{M}_F^{(r)} (\mathcal{M}_E^{(r)}, \mathcal{M}_V^{(r)})$ denotes the set of common face (edge, vertex) indices associated with subdomain $D_r$. See Fig. \ref{chap4-fig-dd-1-1} for a two-dimensional example, where $\mathcal{M}_{F}^{(1)} = \{1,7\}$, $\mathcal{M}_{F}^{(5)} = \{3,4,8,11\}$, $\mathcal{M}_{V}^{(1)} = \{1\}$, $\mathcal{M}_{V}^{(5)} = \{1,2,3,4\}$.
\begin{figure}[H]
  \centering
  \includegraphics[width=0.33\textwidth]{./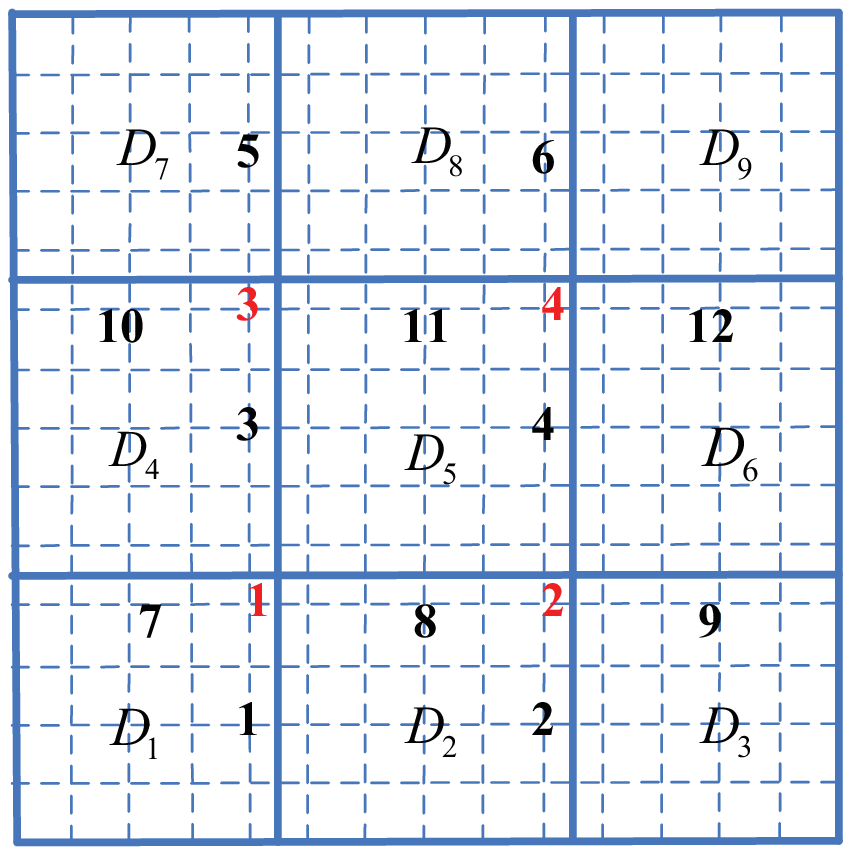}\\
  \caption{The global indices of the common faces and common vertexes for a 2D example.}\label{chap4-fig-dd-1-1}
\end{figure}

The following descriptions are based on the above notations.

\subsection{Schur complement problem}

In this subsection, the Schur complement system of the discrete variational problem \eqref{115-dis} with respect to the dofs on the interface are introduced.
For this purpose, we need to introduce some spaces firstly.

Using the bases $\{\varphi_l\}$ of~$V_p(\mathcal{T}_h)$, we can define some subspaces of $V_p(\mathcal{T}_h)$ as
\begin{align*}
& V_I = \oplus_{r=1}^{N_d}V^{(r)}_I, ~V^{(r)}_I = \mathrm{span}\{\varphi_l:~l \in \mathcal{I}_r\},~r = 1,\cdots,N_d,\\
& V_{X_k} = \mathrm{span}\{\varphi_{l}:~l \in \mathcal{X}_{k}\},~k=1,\cdots,N_X,~X = F, E, V.
\end{align*}

Moreover, some local function spaces in each subdomain $D_r$ are introduced as
\begin{align*}
V_{X_k}^{(r)} = \mathrm{span}\{\varphi_{l}^{(r)}:~l \in \mathcal{X}_k\},~X = F, E, V,
\end{align*}
where $\{\varphi_l^{(r)}\}$ are the truncated functions of the basis $\{\varphi_l\}$
whose support is completely contained in $D_r$, i.e.
\begin{align}\label{chap2-def-trunc-func}
\varphi_l^{(r)}(\bx) = \left\{\begin{array}{ll}
\varphi_l(\bx) & \bx \in \bar{D}_r\\
0 & \bx \in \Omega\backslash \bar{D}_r
\end{array}\right.,~r = 1,\cdots,N_d.
\end{align}
In what follows, we will denote $\zeta^{(r)}$ as the truncated function of $\zeta$ in $D_r$ when non confusion can arise.

Let
\begin{align*}
V^{(r)} = V^{(r)}_I \oplus (\oplus_{k\in \mathcal{M}_F^{(r)}} V_{F_k}^{(r)}) \oplus (\oplus_{k\in \mathcal{M}_E^{(r)}} V_{E_k}^{(r)}) \oplus (\oplus_{k\in \mathcal{M}_V^{(r)}} V_{V_k}^{(r)}),~r=1,\cdots,N_d,
\end{align*}
and for any $r = 1,\cdots, N_d$, define
\begin{align*}
\mathcal{F}_{I}^{(r)} = \{\tilde{\gamma}_{kj}:~ \tilde{\gamma}_{kj} = \gamma_{kj}|_{\bar{D}_r},~ \forall \gamma_{kj} \in \mathcal{F}_I\},~
\mathcal{F}_{B}^{(r)} = \{\tilde{\gamma}_k:~ \tilde{\gamma}_k = \gamma_{k}|_{\bar{D}_r},~ \forall \gamma_{k} \in \mathcal{F}_B\},
\end{align*}
where $\mathcal{F}_I$ and $\mathcal{F}_B$ are defined in \eqref{def-FB-FI}.

We can introduce a sesquilinear form $a_r(\cdot,\cdot)$ by
\begin{align}\nonumber
a_r(u,v) &= \sum_{\tilde{\gamma}_{kj} \in \mathcal{F}_I^{(r)}} \alpha_{kj}\int_{\tilde{\gamma}_{kj}} (u_k-u_j) \cdot \overline{(v_k-v_j)}ds
  \\ \nonumber
&~~~ + \sum_{\tilde{\gamma}_{kj} \in \mathcal{F}_I^{(r)}} \beta_{kj}\int_{\tilde{\gamma}_{kj}}
({\partial_{{\bf n}_k}u_k+\partial_{{\bf n}_j}u_j}) \cdot \overline{(\partial_{{\bf n}_k}v_k+\partial_{{\bf n}_j}v_j)}ds
  \\\nonumber
&~~~+ \sum_{\tilde{\gamma}_k \in \mathcal{F}_B^{(r)}} \left(\theta_{k1} \int_{\tilde{\gamma}_k \cap \Gamma_d} u_k \cdot \overline{v_k} ds
          + \theta_{k2} \int_{\tilde{\gamma}_k \cap \Gamma_n} \partial_{{\bf n}_k} u_k \cdot \overline{\partial_{{\bf n}_k} v_k} ds \right.\\\label{chap4-def-AUV}
         &~~~~~~~~~~~~~~~\left.+ \theta_{k3} \int_{\tilde{\gamma}_k \cap \Gamma_r} ((\partial_{{\bf n}_k}+i\kappa_k )u_k) \cdot \overline{(\partial_{{\bf n}_k}+i \kappa_k)v_k}ds\right),~u,v \in V^{(r)}.
\end{align}
Noting that~$\alpha_{kj}, \beta_{kj}, \theta_{k1}, \theta_{k2}, \theta_{k3} > 0$, it is easy to verify that $a_r(\cdot,\cdot)$ is Hermitian and
semi-positive definite in $V^{(r)}$. Therefore, we can define a semi-norm
\begin{align}\label{def-normAi}
|\cdot|^2_{a_r} := a_r(\cdot, \cdot).
\end{align}

Combining \eqref{115-1} with \eqref{chap4-def-AUV}, we know that $a_r (\cdot,\cdot)$ satisfies
\begin{eqnarray}\label{chap4-115-1-0}
  a(u,v) = \sum\limits_{r=1}^{N_d} a_r (u^{(r)},v^{(r)}),~\forall u,v \in V_p(\mathcal{T}_h),
\end{eqnarray}
where $u^{(r)}, v^{(r)}$ are the corresponding truncated functions of~$u,v$ in~$D_r$, and from \eqref{chap4-def-AUV}, \eqref{chap4-115-1-0} and the support property of these functions, it is easy to know that
\begin{align*}
a_r(u^{(r)}, v^{(r)}) = a(u^{(r)}, v^{(r)}),~\forall~u^{(r)}, v^{(r)} \in V_I^{(r)} \subset V_p(\mathcal{T}_h),~~r = 1,\cdots,N_d.
\end{align*}
Therefore, since $a(\cdot,\cdot)$ is Hermitian positive definite in $V_p(\mathcal{T}_h)$ and $V_I^{(r)} \subset V_p(\mathcal{T}_h)$, we know that $a_r(\cdot,\cdot)$ is Hermitian positive definite in $V_I^{(r)}$.

Based on the above preparation, we can introduce an important space of discrete harmonic functions which is
directly related to the Schur complement.

For any $\mathcal{X}_k (X = F, E, V)$, define
\begin{align}\label{chap2-def-phi-k-l-n}
\phi^{X_k}_{l} = \varphi_{k_l} + \sum\limits_{\nu \in \mathcal{N}_{\mathcal{X}_k}}\check{\phi}^{X_k,\nu}_{l},~l = 1,\cdots,n_{X_k},
\end{align}
where $\check{\phi}^{X_k,\nu}_{l} \in V_I^{(\nu)} (\nu \in \mathcal{N}_{\mathcal{X}_k})$ satisfies the following orthogonality
\begin{align}\label{chap2-def-Hw-i}
a_{\nu}(\check{\phi}^{X_k,\nu}_{l}, v) = -a_{\nu}(\varphi_{k_l}^{(\nu)}, v), ~~\forall v \in V_I^{(\nu)},
\end{align}
here~$k_l$ denotes the global number of the $l$-th dof in $\mathcal{X}_k$, $\varphi_{k_l}^{(\nu)}$ is the truncated function of the basis $\varphi_{k_l}$ in $D_{\nu}$, and $a_{\nu}(\cdot,\cdot)$ is defined in \eqref{chap4-def-AUV}.

Using the above functions, the function space associated with the Schur complement system of the discrete variational problem \eqref{115-dis} can be defined as
\begin{align}\label{chap2-def-W}
\hat{W} = (\oplus_{k=1}^{N_F} W_{F_k}) \oplus (\oplus_{k=1}^{N_E} W_{E_k}) \oplus (\oplus_{k=1}^{N_V} W_{V_k}),
\end{align}
where
\begin{align}\label{chap2-def-W-k-c}
W_{X_k} = \mathrm{span}\{\phi^{X_k}_{1},\cdots,\phi^{X_k}_{n_{X_k}}\},~X = F, E, V,
\end{align}
and the corresponding basis function vectors can be denoted as
\begin{align}\label{chap2-def-Phik}
\Phi_{X_k} = (\phi^{X_k}_{1},\cdots,\phi^{X_k}_{n_{X_k}})^T,~X = F, E, V,
\end{align}
respectively.

Define the Schur complement operator $\hat{S}:\hat{W} \rightarrow \hat{W}$ as
\begin{align}\label{chap2-def-operator-hat-S}
(\hat{S} \hat{u}, \hat{v}) = a(\hat{u}, \hat{v}),~~\forall \hat{u}, \hat{v} \in \hat{W}.
\end{align}
Noting that $a(\cdot,\cdot)$ is Hermitian positive definite in $\hat{W} \subset V_p(\mathcal{T}_h)$,
thus $\hat{S}$ is also Hermitian positive definite.

Using \eqref{chap2-def-operator-hat-S}, we can derive the corresponding Schur complement problem of \eqref{115-dis} as: find $\hat{w} \in \hat{W}$ such that
\begin{align}\label{chap2-def-schur-system}
(\hat{S} \hat{w}, \hat{v}) = \mathcal{L}(\hat{v}),~~\forall \hat{v} \in \hat{W}.
\end{align}

In the next subsection, we will propose a two-level adaptive BDDC algorithm in variational form for solving \eqref{chap2-def-schur-system}.

\subsection{Two-level methods}

In order to construct the two-level adaptive BDDC preconditioner, we need to describe another basis functions of $\hat{W}$,
and carry out an equivalent system of \eqref{chap2-def-schur-system}.

\subsubsection{Primal and dual spaces}

For any $\mathcal{X}_k$ and $\nu \in \mathcal{N}_{\mathcal{X}_k} (X = F, E, V)$, we introduce the basis functions
\begin{align}\label{chap2-def-phi-k-nu-l-n}
\phi^{X_k,\nu}_{l} =  \varphi^{(\nu)}_{k_l} + \check{\phi}^{X_k,\nu}_{l},~l=1,\cdots,n_{X_k},
\end{align}
where  $\check{\phi}^{X_k,\nu}_{l}$ is defined by \eqref{chap2-def-Hw-i}.

From \eqref{chap2-def-phi-k-l-n}, \eqref{chap2-def-phi-k-nu-l-n} and \eqref{chap2-def-trunc-func}, and using the support property of $\varphi_{k_l}^{(\nu)}$ and $\check{\phi}^{X_k,\nu}_{l}$, we have
\begin{align}\label{chap2-rel-phik-phikv}
\phi^{X_k}_l|_{\bar{D}_{\nu}} = \phi^{X_k,\nu}_l|_{\bar{D}_{\nu}},~l=1,\cdots,n_{X_k},~\nu \in \mathcal{N}_{\mathcal{X}_k}, ~X = F, E, V.
\end{align}

Using the above basis functions, the function spaces associated with $F_k$, $E_k$ and $V_k$ are defined as
\begin{align}\label{chap2-def-Wk-BarWk-nu}
W_{X_k}^{(\nu)}  &=\mathrm{span}\{\phi^{X_k,\nu}_{1}, \cdots, \phi^{X_k,\nu}_{n_{X_k}}\},~\nu \in \mathcal{N}_{\mathcal{X}_k},~X = F, E, V,
\end{align}
and set
\begin{align}\label{chap2-def-Phi-Psi-k-nu}
\Phi_{X_k}^{(\nu)} = (\phi^{X_k,\nu}_{1}, \cdots, \phi^{X_k,\nu}_{n_{X_k}})^T,~X = F, E, V.
\end{align}

For any subdomain $D_r$, let
\begin{eqnarray}\label{chap2-def-Wr-old}
W^{(r)} = (\oplus_{k \in \mathcal{M}_F^{(r)}} W_{F_k}^{(r)}) \oplus (\oplus_{k \in \mathcal{M}_E^{(r)}} W_{E_k}^{(r)}) \oplus (\oplus_{k \in \mathcal{M}_V^{(r)}} W_{V_k}^{(r)}),
\end{eqnarray}
and we denote the corresponding basis function vector as $\Phi^{(r)}$.

Then, we will define two transformation operators that change the basis functions $\{\phi_l^{X_k}\}_{l=1}^{n_{X_k}} (X = F, E)$ and $\{\phi_l^{X_k,\nu}\}_{l=1}^{n_{X_k}}(\nu \in \mathcal{N}_{\mathcal{X}_k}, X=F, E)$ into their corresponding dual-primal basis functions.

For $X=F,E$, set $n_{X_k}=n_{\Delta}^{X_k}+n_{\Pi}^{X_k}$, where the integers $n_{\Delta}^{X_k}, n_{\Pi}^{X_k} \ge 0$.
Let the nonsingular matrix $\vec{T}_{X_k} \in \mathbb{C}^{n_{X_k} \times n_{X_k}}$ look like
\begin{eqnarray}\label{chap2-ZJ-Oper-TFk-DelPi-Def}
\vec{T}_{X_k}=(\vec{T}_{\Delta}^{X_k},~ \vec{T}_{\Pi}^{X_k}),
\end{eqnarray}
where $\vec{T}_{\Delta}^{X_k} \in \mathbb{C}^{n_{X_k}\times n_{\Delta}^{X_k}}$ and $\vec{T}_{\Pi}^{X_k} \in \mathbb{C}^{n_{X_k}\times n_{\Pi}^{X_k}}$.

For any given $\nu \in \mathcal{N}_{\mathcal{X}_k} (X = F, E)$, by using $\vec{T}_{X_k}$ (or
~$\vec{T}_{\Delta}^{X_k}$ and~$\vec{T}_{\Pi}^{X_k}$), we can introduce the linear transformation operator
$T_{X_k}$ (or $T_{\Delta}^{X_k}$ and $T_{\Pi}^{X_k}$) which transform the basis vector $\Phi_{X_k}$ and $\Phi_{X_k}^{(\nu)}(\nu \in \mathcal{N}_{\mathcal{X}_k})$ associated with $W_{X_k}$ and $W_{X_k}^{(\nu)}$ into
\begin{eqnarray}\label{chap2-ZJ-Oper-TPhik-Def}
 \Phi_T^{X_k} = T_{X_k} \Phi_{X_k} :=
\left(\begin{array}{l}
\Phi^{X_k}_{\Delta}\\
\Phi^{X_k}_{\Pi}
\end{array}\right),~~
 \Phi_T^{X_k,\nu} = T_{X_k} \Phi_{X_k}^{(\nu)} :=
\left(\begin{array}{l}
\Phi^{X_k,\nu}_{\Delta}\\
\Phi^{X_k,\nu}_{\Pi}
\end{array}\right),
\end{eqnarray}
where
\begin{align}\label{chap2-ZJ-Def-Bases-WkDel-WkPi}
&\Phi^{X_k}_{\zeta} = (\phi^{X_k}_{\zeta,1},\cdots,\phi^{X_k}_{\zeta,n_{\zeta}^{X_k}})^T = T_{\zeta}^{X_k} \Phi_{X_k}:=
 (\vec{T}_{\zeta}^{X_k})^T \Phi_{X_k},~\zeta=\Delta,
\Pi,\\
\label{chap2-Zj-Def-Vec-TildPhiDel-1}
&\Phi^{X_k,\nu}_{\zeta} = (\phi^{X_k,\nu}_{\zeta,1},\cdots,\phi^{X_k,\nu}_{\zeta,n_{\zeta}^{X_k}})^T = T_{\zeta}^{X_k} \Phi_{X_k}^{(\nu)}:=
 (\vec{T}_{\zeta}^{X_k})^T \Phi_{X_k}^{(\nu)},~\zeta=\Delta,
\Pi.
\end{align}

Corresponding to \eqref{chap2-ZJ-Def-Bases-WkDel-WkPi},
\eqref{chap2-Zj-Def-Vec-TildPhiDel-1} and \eqref{chap2-rel-phik-phikv}, it follows that
\begin{align}\label{chap2-rel-phichik-phichikv}
\phi^{X_k}_{\zeta,l}|_{\bar{D}_{\nu}} = \phi^{X_k,\nu}_{\zeta,l}|_{\bar{D}_{\nu}},~l=1,\cdots,n_{\zeta}^{X_k},~\zeta = \Delta,\Pi,~\nu \in \mathcal{N}_{X_k},~X = F, E.
\end{align}

For an adaptive BDDC algorithm, the transformation operators are constructed by solving a generalized eigenvalue problem on each glob (or $\mathcal{X}_k, k=1,\cdots,N_X, X = F, E$).
The idea is originated from the upper bound estimate of BDDC preconditioned operator.

In the following, we present the generalized eigenvalue problems for the adaptive BDDC algorithm on each glob $\mathcal{X}_k (k=1,\cdots,N_X, X=F,E)$.
For that we introduce the scaling operators firstly.

For any given $\nu \in \mathcal{N}_{X_k}$, we present the scaling operator
$D_{X_{k}}^{(\nu)}: U \rightarrow  U$ $(U = W_{X_k}, W_{X_k}^{(\mu)}(\mu \in \mathcal{N}_{\mathcal{X}_k})$) or the scaling matrix
$\vec{D}_{X_{k}}^{(\nu)} \in \mathbb{C}^{n_{X_k}\times n_{X_k}}$, which satisfies that, for any given function $w = \vec{w}^T \Psi \in
U$, $\vec{w} \in \mathbb{C}^{n_{X_k}}$, $\Psi = \Phi_{X_k}, \Phi_{X_k}^{\mu}(\mu \in \mathcal{N}_{\mathcal{X}_k})$, we have
\begin{eqnarray}\label{chap2-def-D-operator-matrix}
D_{X_{k}}^{(\nu)} w = \vec{w}^T (\vec{D}_{X_{k}}^{(\nu)})^T \Psi,
\end{eqnarray}
where $\vec{D}_{X_{k}}^{(\nu)}$ is nonsingular, and
\begin{eqnarray}\label{chap2-def-D-trans}
\sum\limits_{\nu \in \mathcal{N}_{\mathcal{X}_k}} D_{X_{k}}^{(\nu)}  = I,
\end{eqnarray}
here $I$ is an identify operator in $U$.

Using the transformation operators and the scaling operators, a new set of basis functions of $W_{X_k}^{(\nu)} (\nu \in \mathcal{N}_{X_k})$ can be defined as
\begin{align}\label{chap2-ZJ-TBarphik-ij-Def}
T_{X_k} \Phi^{X_k,\nu,\mu}_D := \left(
\begin{array}{l}
\Phi^{X_k, \nu, \mu}_{D,\Delta}\\
\Phi^{X_k, \nu, \mu}_{D,\Pi}
\end{array}\right),
\end{align}
where
\begin{align}\label{chap2-ZJ-Dphik-ij-Def}
&\Phi^{X_k,\nu,\mu}_D = D_{X_{k}}^{(\mu)} \Phi_{X_k}^{(\nu)}:=(\vec{D}_{X_{k}}^{(\mu)})^T
\Phi_{X_k}^{(\nu)}, \\\label{chap2-ZJ-TDphik-ij-Def-00}
&\Phi^{X_k,\nu,\mu}_{D,\zeta}=T_{\zeta}^{X_k} \Phi^{X_k,\nu,\mu}_D := (\vec{T}_{\zeta}^{X_k})^T \Phi^{X_k,\nu,\mu}_D,~\zeta=\Delta,
\Pi.
\end{align}

Then, we introduce some auxiliary basis functions for each equivalence class and derive some related properties.

For any given $\mathcal{X}_k$ and $\nu \in \mathcal{N}_{\mathcal{X}_k}$, we introduce
\begin{align}\label{chap2-def-barphi-k-nu-l-n}
\bar{\phi}^{X_k,\nu}_l = \phi^{X_k,\nu}_{l} + \check{\bar{\phi}}^{X_k,\nu}_l,~l=1,\cdots,n_{X_k},
\end{align}
where $\check{\bar{\phi}}^{X_k,\nu}_l \in W^{(\nu)} \backslash W_{X_k}^{(\nu)}$ satisfies
\begin{align}\label{chap2-def-barphi-2}
a_{\nu}(\check{\bar{\phi}}^{X_k,\nu}_l, v) = -a_{\nu}(\phi^{X_k,\nu}_{l}, v), ~~\forall v \in W^{(\nu)} \backslash W_{X_k}^{(\nu)}.
\end{align}

Similar to Lemma 1 in \cite{PengWang18:683}, we can prove that $a_{\nu}(\cdot,\cdot)$ is positive definite in $W^{(\nu)} \backslash W_{X_k}^{(\nu)}$,
therefore, the basis functions $\{\bar{\phi}_l^{X_k,\nu}\}$ are existent and unique.

By using \eqref{chap2-def-Hw-i}, \eqref{chap2-def-phi-k-nu-l-n},  \eqref{chap2-def-barphi-k-nu-l-n} and  \eqref{chap2-def-barphi-2}, we can obtain
\begin{align}\label{chap2-def-barHw-i-Fim-22}
&\bar{\phi}^{X_k,\nu}_{l} - \phi^{X_k,\nu}_{l} \in V^{(\nu)} \backslash V_{X_k}^{(\nu)},~l=1,\cdots, n_{X_k},\\\label{chap2-def-barHw-i-Fim}
&a_{\nu}(\bar{\phi}^{X_k,\nu}_{l}, v) = 0,~\forall v \in V^{(\nu)} \backslash V_{X_k}^{(\nu)},~l=1,\cdots, n_{X_k}.
\end{align}

Using the basis functions $\{\bar{\phi}_l^{X_k,\nu}\}_{l=1}^{n_{X_k}} (\nu \in \mathcal{N}_{X_k})$, we can define the corresponding function spaces
\begin{align}\label{chap2-def-bar-W-k-nu}
 \bar{W}_{X_k}^{(\nu)}  =\mathrm{span}\{\bar{\phi}^{X_k,\nu}_{1}, \cdots, \bar{\phi}^{X_k,\nu}_{n_{X_k}}\},~\nu \in \mathcal{N}_{X_k},
\end{align}
and set
\begin{align}\label{chap2-def-barPhi-k-nu}
\bar{\Phi}_{X_k}^{(\nu)} = (\bar{\phi}^{X_k,\nu}_{1}, \cdots, \bar{\phi}^{X_k,\nu}_{n_{X_k}})^T, ~\nu \in \mathcal{N}_{X_k}.
\end{align}

Similar to the proof of Lemma 2 in \cite{PengWang18:683}, it can be proved that the following lemma holds.
\begin{lemma}\label{chap2-lemma-contral-iequ}
For any given subdomain $D_r$ and vectors $\{\vec{w}_{X_m}^{(r)} \in \mathbb{C}^{n_{X_m}},~m \in \mathcal{M}_X^{(r)}\} (X = F, E)$, we have
\begin{eqnarray}\label{chap2-4-1-2-tildew}
\sum\limits_{m \in \mathcal{M}_X^{(r)}} |\bar{w}_{X_m}^{(r)}|_{a_r}^2 \le |\mathcal{M}_X^{(r)}| |w^{(r)}|_{a_r}^2,~r=1,\cdots,N_d,
\end{eqnarray}
where
\begin{align*}
w^{(r)} &= \sum\limits_{m \in \mathcal{M}_F^{(r)}} w_{F_m}^{(r)} + \sum\limits_{m \in \mathcal{M}_E^{(r)}} w_{E_m}^{(r)} + \sum\limits_{m \in \mathcal{M}_V^{(r)}} w_{V_m}^{(r)},\\
\bar{w}_{X_m}^{(r)} &= (\vec{w}_{X_m}^{(r)})^T \bar{\Phi}_{X_m}^{(r)},~w_{X_m}^{(r)} = (\vec{w}_{X_m}^{(r)})^T \Phi_{X_m}^{(r)},~\forall m \in \mathcal{M}_X^{(r)},
\end{align*}
here $\mathcal{M}_F^{(r)}, \mathcal{M}_E^{(r)}, \mathcal{M}_V^{(r)}$ are defined in \eqref{chap2-def-M-i}, and if $X = F$ (or $X = E$), then $w_{\alpha_m}^{(r)} (\alpha \in \{F,E,V\}\backslash \{X\})$ is any given function in $W_{\alpha_m}^{(r)}$.
\end{lemma}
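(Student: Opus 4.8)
The plan is to reduce the inequality to a pointwise-in-subdomain application of the orthogonality property \eqref{chap2-def-barHw-i-Fim}, combined with a crude counting argument that produces the factor $|\mathcal{M}_X^{(r)}|$. First I would fix $r$ and $X\in\{F,E\}$ and rewrite $w^{(r)}$ in the decomposition $V^{(r)} = V_I^{(r)}\oplus(\oplus_k V_{F_k}^{(r)})\oplus(\oplus_k V_{E_k}^{(r)})\oplus(\oplus_k V_{V_k}^{(r)})$. For each $m\in\mathcal{M}_X^{(r)}$, I would split $\bar w_{X_m}^{(r)} = w_{X_m}^{(r)} + (\bar w_{X_m}^{(r)} - w_{X_m}^{(r)})$ and note, via \eqref{chap2-def-barHw-i-Fim-22}, that the correction $\bar w_{X_m}^{(r)} - w_{X_m}^{(r)}$ lies in $V^{(r)}\backslash V_{X_m}^{(r)}$, while \eqref{chap2-def-barHw-i-Fim} says $a_r(\bar w_{X_m}^{(r)}, v) = 0$ for all $v$ in that complement. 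Hence $\bar w_{X_m}^{(r)}$ is the $a_r$-orthogonal projection-type object whose $a_r$-seminorm is minimal among all functions agreeing with $w_{X_m}^{(r)}$ on the $X_m$-component; in particular $|\bar w_{X_m}^{(r)}|_{a_r}^2 = a_r(\bar w_{X_m}^{(r)}, w_{X_m}^{(r)})$, because the difference is $a_r$-orthogonal to $\bar w_{X_m}^{(r)}$ itself (both the difference and its "test" direction live in the complement where $a_r(\bar w_{X_m}^{(r)},\cdot)$ vanishes).

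Next I would express $w^{(r)}$ as a sum over all globs touching $D_r$ and test: the key identity is that, for each fixed $m\in\mathcal{M}_X^{(r)}$, one has $a_r(\bar w_{X_m}^{(r)}, w^{(r)}) = a_r(\bar w_{X_m}^{(r)}, w_{X_m}^{(r)})$, since every other summand of $w^{(r)}$ (the $V_I^{(r)}$ part, the vertex parts, and the $X_{m'}$ parts for $m'\neq m$, as well as the other-type parts when one allows the arbitrary functions mentioned in the statement) sits in $V^{(r)}\backslash V_{X_m}^{(r)}$ and is therefore annihilated by $a_r(\bar w_{X_m}^{(r)},\cdot)$. Combining with the previous paragraph gives $|\bar w_{X_m}^{(r)}|_{a_r}^2 = a_r(\bar w_{X_m}^{(r)}, w^{(r)})$. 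Summing over $m\in\mathcal{M}_X^{(r)}$ and applying the Cauchy–Schwarz inequality for the Hermitian semi-definite form $a_r(\cdot,\cdot)$ yields
\begin{align*}
\sum_{m\in\mathcal{M}_X^{(r)}} |\bar w_{X_m}^{(r)}|_{a_r}^2
  = a_r\Bigl(\sum_{m\in\mathcal{M}_X^{(r)}}\bar w_{X_m}^{(r)},\, w^{(r)}\Bigr)
  \le \Bigl|\sum_{m\in\mathcal{M}_X^{(r)}}\bar w_{X_m}^{(r)}\Bigr|_{a_r} |w^{(r)}|_{a_r}.
\end{align*}
Then bounding the first factor by the triangle inequality and Cauchy–Schwarz in $\mathbb{R}^{|\mathcal{M}_X^{(r)}|}$, $\bigl|\sum_m \bar w_{X_m}^{(r)}\bigr|_{a_r} \le |\mathcal{M}_X^{(r)}|^{1/2}\bigl(\sum_m |\bar w_{X_m}^{(r)}|_{a_r}^2\bigr)^{1/2}$, and cancelling one power of $\bigl(\sum_m|\bar w_{X_m}^{(r)}|_{a_r}^2\bigr)^{1/2}$ gives exactly \eqref{chap2-4-1-2-tildew}.

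I expect the main obstacle to be the bookkeeping in the second paragraph: verifying precisely that every summand of $w^{(r)}$ other than the $X_m$-piece lies in $V^{(r)}\backslash V_{X_m}^{(r)}$, so that the orthogonality \eqref{chap2-def-barHw-i-Fim} actually applies. This requires careful use of the direct-sum structure of $V^{(r)}$ together with the fact that $\bar\phi^{X_m,\nu}_l - \phi^{X_m,\nu}_l$ and the relevant correction functions have support controlled by \eqref{chap2-def-trunc-func}, and it is the point where the hypothesis that $w_{\alpha_m}^{(r)}$ for $\alpha\neq X$ may be arbitrary in $W_{\alpha_m}^{(r)}$ has to be handled cleanly (these pieces also land in the complement $V^{(r)}\backslash V_{X_m}^{(r)}$ and hence do not interfere). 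Modulo this, everything else is the standard Cauchy–Schwarz/triangle-inequality chain, and the proof mirrors that of Lemma 2 in \cite{PengWang18:683}; the only genuinely three-dimensional feature is that $X$ now ranges over both faces and edges, but the argument is uniform in $X$.
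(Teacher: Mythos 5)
Your proof is correct and is essentially the intended argument: the paper itself omits the proof of this lemma, deferring to Lemma 2 of \cite{PengWang18:683}, and the standard chain there is exactly yours --- the orthogonality \eqref{chap2-def-barHw-i-Fim} together with \eqref{chap2-def-barHw-i-Fim-22} gives $|\bar{w}_{X_m}^{(r)}|_{a_r}^2 = a_r(\bar{w}_{X_m}^{(r)}, w^{(r)})$ because every summand of $w^{(r)}$ other than $w_{X_m}^{(r)}$ lies in the complement $V^{(r)}\backslash V_{X_m}^{(r)}$ (each $W_{Y_{m'}}^{(r)}\subset V_I^{(r)}\oplus V_{Y_{m'}}^{(r)}$ by construction of the discrete-harmonic basis), after which Cauchy--Schwarz for the Hermitian semi-definite form and the $\ell^1$--$\ell^2$ bound produce the factor $|\mathcal{M}_X^{(r)}|$. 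The bookkeeping point you flag is the only place requiring care, and it does go through.
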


Then, we introduce matrices
\begin{eqnarray}\label{chap2-Sij-def}
\vec{S}^{(\nu)}_{X_k}  = (b_{l,m}^{(\nu)})_{n_{X_k}\times
n_{X_k}},~b_{l,m}^{(\nu)} = a_{\nu}(\phi^{X_k,\nu}_m,
\phi^{X_k,\nu}_l),~l,m=1,\cdots, n_{X_k},~\nu \in \mathcal{N}_{\mathcal{X}_k},\\\label{chap2-Sij-def-2}
\vec{\bar{S}}^{(\nu)}_{X_k}  = (c_{l,m}^{(\nu)})_{n_{X_k}\times
n_{X_k}},~c_{l,m}^{(\nu)} = a_{\nu}(\bar{\phi}^{X_k,\nu}_m,
\bar{\phi}^{X_k,\nu}_l),~l,m=1,\cdots, n_{X_k},~\nu \in \mathcal{N}_{\mathcal{X}_k}.
\end{eqnarray}
Since $a_{\nu}(\cdot,\cdot)$ is Hermitian semi-positive definite in $W^{(\nu)}$ and $\bar{\phi}^{X_k,\nu}_l \in W^{(\nu)}$,
we can see that the matrices $\vec{\bar{S}}_{X_{k}}^{(\nu)}(\nu \in \mathcal{N}_{\mathcal{X}_k})$ is Hermitian semi-positive definite.
Therefore, we can define the parallel sum matrix~(see~\cite{AndersonDuffin69:576} for more detials) $\prod_{\nu \in \mathcal{N}_{X_k}} \vec{\bar{S}}_{X_{k}}^{(\nu)}$.

From the properties of the parallel sum matrix, we know that $\prod_{\nu \in \mathcal{N}_{X_k}} \vec{\bar{S}}_{X_{k}}^{(\nu)}$ is Hermitian semi-positive definite and satisfies the following spectrum estimations
\begin{eqnarray}\label{chap2-parallel-property}
\prod_{\nu \in \mathcal{N}_{X_k}} \vec{\bar{S}}_{X_{k}}^{(\nu)} \le
\vec{\bar{S}}_{X_{k}}^{(\mu)},~\mu \in \mathcal{N}_{\mathcal{X}_k}.
\end{eqnarray}

Using~$\vec{S}_{X_{k}}^{(\nu)}$ and $\vec{\bar{S}}_{X_{k}}^{(\nu)}(\nu \in \mathcal{N}_{\mathcal{X}_k})$, we can introduce a generalized eigenvalue problem
\begin{eqnarray}\label{chap2-eig-pro-intro}
\vec{A}_{X_k}^D \vec{v} = \lambda \vec{B}_{X_k} \vec{v},~\vec{v} \in \mathbb{C}^{n_{X_k}},
\end{eqnarray}
where
\begin{eqnarray}\label{chap2-Def-A-B}
\vec{A}_{X_k}^D = \sum\limits_{\nu \in \mathcal{N}_{\mathcal{X}_k}} \sum\limits_{\mu \in \mathcal{N}_{\mathcal{X}_k} \backslash \{\nu\}} (\vec{D}_{X_k}^{(\mu)})^H \vec{S}_{X_k}^{(\nu)} \vec{D}_{X_k}^{(\mu)},~
\vec{B}_{X_k} = \prod_{\nu \in \mathcal{N}_{X_k}} \vec{\bar{S}}_{X_{k}}^{(\nu)},
\end{eqnarray}
here the eigenvalue $\lambda \in \mathbb{C}$, $\vec{D}_{X_k}^{(\nu)}(\nu \in \mathcal{N}_{\mathcal{X}_k})$ are the scaling matrices, $\diamond^H$ denotes the conjugate transpose of $\diamond$.

For a given real number $\Theta_X \ge 1$, we assume that the eigenvalues $\lambda_k (k=1,\cdots,n_{X_k})$ in \eqref{chap2-eig-pro-intro} satisfy
\begin{align}\label{chap2-lambda}
|\lambda_1| \le |\lambda_2| \le \cdots \le |\lambda_{n_{\Delta}^{X_k}}| \le \Theta_X \le |\lambda_{n_{\Delta}^{X_k}+1}| \le \cdots \le |\lambda_{n_{X_k}}|,
\end{align}
where $n_{\Delta}^{X_k}$ is a nonnegative integer.

We assume that the eigenvectors  $\vec{v}_l$ associated with $\lambda_l(l=1,\cdots,n_{X_k})$ have the orthogonal relation
\begin{align}\label{chap2-oth-1}
\vec{v}_l^H \vec{A}_{X_k}^D \vec{v}_m = \vec{v}_l^H \vec{B}_{X_k} \vec{v}_m = 0,~\mbox{if}~m \neq l.
\end{align}
and let the submatrices of $\vec{T}_{X_k}$ defined in \eqref{chap2-ZJ-Oper-TFk-DelPi-Def} be
\begin{align}\label{chap2-def-T-Delta1-Pi1}
\vec{T}_{\Delta}^{X_k}:= (\vec{v}_1,\cdots,\vec{v}_{n_{\Delta}^{X_k}}),~~\vec{T}_{\Pi}^{X_k}:= (\vec{v}_{n_{\Delta}^{X_k}+1},\cdots,\vec{v}_{n_{X_k}}).
\end{align}

By the definitions \eqref{chap2-ZJ-Oper-TFk-DelPi-Def} and \eqref{chap2-def-T-Delta1-Pi1} of the transform matrix $\vec{T}_{X_k}$,
the transformation operator $T_{X_k}$ can be obtained from \eqref{chap2-ZJ-Oper-TPhik-Def} and the following lemma holds.

\begin{lemma}
For the given real number $\Theta_X \ge 1 (X = F, E)$ in \eqref{chap2-lambda}, and assume that the transformation operator $T_{X_k}$ is given by \eqref{chap2-ZJ-Oper-TFk-DelPi-Def}, \eqref{chap2-ZJ-Oper-TPhik-Def} and \eqref{chap2-def-T-Delta1-Pi1}, then we can obtain the following estimate
 \begin{eqnarray}\label{chap2-ZJ-Tphik-TDphik-Prop}
\sum\limits_{s \in \mathcal{N}_{\mathcal{X}_k} \backslash \{r\}} (|w_{D,\Delta}^{X_k,r,s}|_{a_r}^2 +
|\tilde{w}_{D,\Delta}^{X_k,s,r}|_{a_s}^2) \le \Theta_X
|\bar{w}_{X_k,\Delta}^{(r)} +\bar{w}_{X_k,\Pi}^{(r)}|_{a_r}^2,~ \forall r \in \mathcal{N}_{\mathcal{X}_k},
\end{eqnarray}
where
\begin{eqnarray}\label{chap2-ZJ-Tphik-TDphik-w1}
w_{D,\Delta}^{X_k,r,s} =(\vec{w}_{\Delta})^T
\Phi^{X_k,r,s}_{D,\Delta}, \tilde{w}_{D,\Delta}^{X_k,s,r} =(\vec{w}_{\Delta})^T
\Phi^{X_k,s,r}_{D,\Delta}, \bar{w}_{X_k,\zeta}^{(r)}=
(\vec{w}_{\zeta})^T  \bar{\Phi}^{X_k,r}_{\zeta}, \zeta=\Delta, \Pi,
\end{eqnarray}
here $\Phi^{X_k,r,s}_{D,\Delta}$ and $\Phi^{X_k,s, r}_{D,\Delta}$ are defined in \eqref{chap2-ZJ-TDphik-ij-Def-00},
\begin{align}\label{chap2-ZJ-TDphik-ij-Def}
 \bar{\Phi}^{X_k,r}_{\zeta} = (\bar{\phi}^{X_k,r}_{\zeta,1},\cdots,\bar{\phi}^{X_k,r}_{\zeta,n_{\chi}^{X_k}})^T  = T_{\zeta}^{X_k} \bar{\Phi}_{X_k}^{(r)}:=
 (\vec{T}_{\zeta}^{X_k})^T  \bar{\Phi}_{X_k}^{(r)},~\zeta =\Delta,
\Pi,
\end{align}
and $\vec{w}_{X_k,\Delta} \in \mathbb{C}^{n_{\Delta}^{X_k}}$, $\vec{w}_{X_k,\Pi} \in \mathbb{C}^{n_{\Pi}^{X_k}}$
are any given vectors.
\end{lemma}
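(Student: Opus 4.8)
The plan is to reduce the claimed estimate \eqref{chap2-ZJ-Tphik-TDphik-Prop} to the generalized eigenvalue problem \eqref{chap2-eig-pro-intro} via an algebraic identity expressing the left-hand side in terms of the matrices $\vec{A}_{X_k}^D$ and $\vec{B}_{X_k}$. First I would fix a subdomain $D_r$ with $r \in \mathcal{N}_{\mathcal{X}_k}$ and, using \eqref{chap2-ZJ-TDphik-ij-Def-00}, \eqref{chap2-ZJ-Dphik-ij-Def} and \eqref{chap2-Sij-def}, rewrite each term $|w_{D,\Delta}^{X_k,r,s}|_{a_r}^2 = a_r\big((\vec{w}_{\Delta})^T \Phi^{X_k,r,s}_{D,\Delta}, (\vec{w}_{\Delta})^T \Phi^{X_k,r,s}_{D,\Delta}\big)$ as a quadratic form $\vec{w}_{\Delta}^H (\vec{D}_{X_k}^{(s)})^H \vec{S}_{X_k}^{(r)} \vec{D}_{X_k}^{(s)} \vec{w}_{\Delta}$ (and similarly for $|\tilde w_{D,\Delta}^{X_k,s,r}|_{a_s}^2$ with the roles of $r$ and $s$ interchanged, using $\vec{S}_{X_k}^{(s)}$). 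Summing over $s \in \mathcal{N}_{\mathcal{X}_k}\setminus\{r\}$, then over the symmetric contributions, I would recognize the total as $\vec{w}_{\Delta}^H \vec{A}_{X_k}^D \vec{w}_{\Delta}$; here one must be careful that the double sum in \eqref{chap2-Def-A-B} over all ordered pairs $(\nu,\mu)$ with $\nu\neq\mu$ exactly reproduces both families of terms appearing on the left of \eqref{chap2-ZJ-Tphik-TDphik-Prop} when one fixes $r$ and sums over $s$, so that the per-$r$ bound follows from a single eigenvalue estimate rather than a sum over $r$.

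Next I would handle the right-hand side. Using \eqref{chap2-ZJ-TDphik-ij-Def}, \eqref{chap2-def-barPhi-k-nu} and \eqref{chap2-Sij-def-2}, the quantity $|\bar w_{X_k,\Delta}^{(r)} + \bar w_{X_k,\Pi}^{(r)}|_{a_r}^2$ equals $a_r$ applied to $(\vec{w}_{\Delta})^T \bar\Phi^{X_k,r}_{\Delta} + (\vec{w}_{\Pi})^T \bar\Phi^{X_k,r}_{\Pi} = \vec{w}^H_{full}\,\vec{T}_{X_k}^H \bar{\Phi}_{X_k}^{(r)}$ where $\vec{w}_{full} = (\vec{w}_{\Delta}^T, \vec{w}_{\Pi}^T)^T$, which becomes $\vec{w}_{full}^H\, \vec{T}_{X_k}^H \vec{\bar S}_{X_k}^{(r)} \vec{T}_{X_k}\, \vec{w}_{full}$ after expanding against the basis $\bar\Phi_{X_k}^{(r)}$. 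The spectral inequality \eqref{chap2-parallel-property} then gives $\vec{T}_{X_k}^H \vec{B}_{X_k} \vec{T}_{X_k} \le \vec{T}_{X_k}^H \vec{\bar S}_{X_k}^{(r)} \vec{T}_{X_k}$, so it suffices to prove the bound with $\vec{\bar S}_{X_k}^{(r)}$ replaced by $\vec{B}_{X_k}$ on the right. At this point the claim reads $\vec{w}_{\Delta}^H \vec{A}_{X_k}^D \vec{w}_{\Delta} \le \Theta_X\, \vec{w}_{full}^H \vec{T}_{X_k}^H \vec{B}_{X_k} \vec{T}_{X_k}\, \vec{w}_{full}$.

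To close the argument I would change variables through the transformation matrix $\vec{T}_{X_k}$. Because $\vec{T}_{\Delta}^{X_k}$ and $\vec{T}_{\Pi}^{X_k}$ in \eqref{chap2-def-T-Delta1-Pi1} are built from the eigenvectors of \eqref{chap2-eig-pro-intro}, the orthogonality relations \eqref{chap2-oth-1} simultaneously diagonalize $\vec{A}_{X_k}^D$ and $\vec{B}_{X_k}$ in the $\vec{T}_{X_k}$-basis. Writing $\vec{w}_{full}$ in coordinates, the left side $\vec{w}_{\Delta}^H \vec{A}_{X_k}^D \vec{w}_{\Delta}$ corresponds (after accounting for $\Phi_{\Delta}$ versus $\Phi$ via \eqref{chap2-ZJ-Def-Bases-WkDel-WkPi}) to $\sum_{l \le n_{\Delta}^{X_k}} |(\vec{w}_{\Delta})_l|^2 \, \vec{v}_l^H \vec{A}_{X_k}^D \vec{v}_l$, while the right side gives $\Theta_X \sum_{l} |(\vec{w}_{full})_l|^2 \, \vec{v}_l^H \vec{B}_{X_k} \vec{v}_l$; the inequality then follows term by term from \eqref{chap2-lambda}, since for indices $l \le n_{\Delta}^{X_k}$ we have $\vec{v}_l^H \vec{A}_{X_k}^D \vec{v}_l = |\lambda_l|\, \vec{v}_l^H \vec{B}_{X_k} \vec{v}_l \le \Theta_X\, \vec{v}_l^H \vec{B}_{X_k} \vec{v}_l$, and the $\Pi$-indices only add nonnegative slack on the right.

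The main obstacle I expect is bookkeeping rather than conceptual: matching the restriction identities \eqref{chap2-rel-phichik-phichikv} and the scaling relations \eqref{chap2-def-D-operator-matrix}--\eqref{chap2-def-D-trans} so that $a_r$-norms of the localized, scaled, transformed functions collapse exactly onto the matrices $\vec{D}_{X_k}^{(\nu)}$, $\vec{S}_{X_k}^{(\nu)}$ and $\vec{T}_{X_k}$ with the correct indices, and in particular verifying that the double-sum structure in $\vec{A}_{X_k}^D$ reproduces the left-hand side of \eqref{chap2-ZJ-Tphik-TDphik-Prop} for each fixed $r$ without an extraneous factor. Checking that $a_r(\cdot,\cdot)$ evaluated on functions supported in $\bar D_r$ agrees with the glob-matrix entries (using \eqref{chap4-def-AUV} and the support properties) is the one place where care with the definition of the truncated basis is essential; once that is in place, the spectral step is immediate from \eqref{chap2-parallel-property}, \eqref{chap2-oth-1} and \eqref{chap2-lambda}.
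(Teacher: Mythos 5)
Your proposal follows essentially the same route as the paper's proof: rewrite the left-hand side as quadratic forms in $\vec{S}_{X_k}^{(\cdot)}$, $\vec{D}_{X_k}^{(\cdot)}$ and $\vec{T}_{\Delta}^{X_k}$, pass to the eigenbasis where \eqref{chap2-oth-1} simultaneously diagonalizes $\vec{A}_{X_k}^D$ and $\vec{B}_{X_k}$, apply the threshold \eqref{chap2-lambda} termwise, absorb the $\Pi$-component using the $\vec{B}_{X_k}$-orthogonality of the $\Delta$- and $\Pi$-blocks, and finish with the parallel-sum bound \eqref{chap2-parallel-property}. One small correction to the point you flagged: for a common edge ($|\mathcal{N}_{\mathcal{X}_k}|=4$) the fixed-$r$ sum does \emph{not} exactly reproduce $\vec{A}_{X_k}^D$ --- it only collects the ordered pairs $(\nu,\mu)$ with $r\in\{\nu,\mu\}$ --- but since the omitted terms $(\vec{D}_{X_k}^{(\mu)})^H \vec{S}_{X_k}^{(\nu)} \vec{D}_{X_k}^{(\mu)}$ are Hermitian semi-positive definite, the needed one-sided bound by $\vec{w}_{\Delta}^H(\vec{T}_{\Delta}^{X_k})^H\vec{A}_{X_k}^D\vec{T}_{\Delta}^{X_k}\vec{w}_{\Delta}$ still holds, which is precisely how the paper phrases this step (as an inequality, not an identity).
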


\begin{proof}
By using \eqref{chap2-oth-1}, we can obtain
\begin{eqnarray}\label{chap2-OrthProp-Tk-Delta}
&&(\vec{T}_{\Delta}^{X_k})^H \vec{C} \vec{T}_{\Delta}^{X_k}= diag
\{\vec{v}_1^H \vec{C} \vec{v}_1,\cdots, \vec{v}_{n_{\Delta}^{X_k}}^H \vec{C}
\vec{v}_{n_{\Delta}^{X_k}}\},~~\vec{C} = \vec{A}_{X_k}^D, \vec{B}_{X_k}, \\\label{chap2-OrthProp-Tk-Delta-Pi}
&&(\vec{T}_{\Pi}^{X_k})^H \vec{B}_{X_k}
\vec{T}_{\Delta}^{X_k}=0,~ (\vec{T}_{\Delta}^{X_k})^H \vec{B}_{X_k}
\vec{T}_{\Pi}^{X_k} = 0.
\end{eqnarray}
From \eqref{chap2-eig-pro-intro}, it is easily seen that
\begin{align}\label{chap2-remark-equ-general-eig-pro}
\vec{v}_l^H \vec{A}_{X_k}^D \vec{v}_l = |\lambda_l| \vec{v}_l^H \vec{B}_{X_k} \vec{v}_l,~l=1,\cdots,n_{\Delta}^{X_k}.
\end{align}

Combining \eqref{chap2-ZJ-TDphik-ij-Def} with \eqref{chap2-ZJ-TDphik-ij-Def-00},
we can separately rewrite the functions in \eqref{chap2-ZJ-Tphik-TDphik-w1} as
\begin{align}\label{chap2-Esp-wk-Del-Td-nu}
 w_{D,\Delta}^{X_k,r,s} &=(\vec{w}_{\Delta})^T
(\vec{T}_{\Delta}^{X_k})^T  (\vec{D}_{X_{k}}^{(s)})^T \Phi_{X_k}^{(r)},~
\tilde{w}_{D,\Delta}^{X_k,s,r} =(\vec{w}_{\Delta})^T
(\vec{T}_{\Delta}^{X_k})^T  (\vec{D}_{X_{k}}^{(r)})^T \Phi_{X_k}^{(s)},\\\label{chap2-Esp-Barwk-chi-Ti}
 \bar{w}_{X_k,\zeta}^{(r)} &= (\vec{w}_{\zeta})^T (\vec{T}_{\zeta}^{X_k})^T  \bar{\Phi}_{X_k}^{(\nu)},~\zeta=\Delta, \Pi.
\end{align}

According to \eqref{def-normAi}, \eqref{chap2-Esp-wk-Del-Td-nu} and \eqref{chap2-Sij-def}, we have
\begin{align}\nonumber \label{chap2-Ai-Dkj-wkDeltai}
|w_{D,\Delta}^{X_k,r,s}|_{a_r}^2 
&= \vec{w}_{\Delta}^H(\vec{T}_{\Delta}^{X_k})^H (\vec{D}_{X_k}^{(s)})^H a_r(\Phi_{X_k}^{(r)},(\Phi_{X_k}^{(r)})^T)
 \vec{D}_{X_k}^{(s)} \vec{T}_{\Delta}^{X_k} \vec{w}_{\Delta}\\
&= \vec{w}_{\Delta}^H(\vec{T}_{\Delta}^{X_k})^H (\vec{D}_{X_k}^{(s)})^H\vec{S}_{X_k}^{(r)}
 \vec{D}_{X_k}^{(s)} \vec{T}_{\Delta}^{X_k} \vec{w}_{\Delta}.
\end{align}

Similarly, we can obtain
 \begin{eqnarray}\label{chap2-Aj-Dki-wkDeltaj}
|\tilde{w}_{D,\Delta}^{X_k,s,r}|_{a_s}^2
= \vec{w}_{\Delta}^H (\vec{T}_{\Delta}^{X_k})^H (\vec{D}_{X_k}^{(r)})^H \vec{S}_{X_k}^{(s)}
 \vec{D}_{X_k}^{(r)} \vec{T}_{\Delta}^{X_k} \vec{w}_{\Delta}.
\end{eqnarray}

Using \eqref{chap2-Ai-Dkj-wkDeltai}, \eqref{chap2-Aj-Dki-wkDeltaj}, \eqref{chap2-Def-A-B}, \eqref{chap2-OrthProp-Tk-Delta} and noting that $\vec{S}_{X_k}^{(\mu)} (\mu \in \mathcal{N}_{\mathcal{X}_k})$ are semi-positive definite, we have
\begin{align*} 
\sum\limits_{s \in \mathcal{N}_{\mathcal{X}_k} \backslash \{r\}} (|w_{D,\Delta}^{X_k,r,s}|_{a_r}^2 +
|\tilde{w}_{D,\Delta}^{X_k,s,r}|_{a_s}^2)
&\le \vec{w}_{\Delta}^H diag\{\vec{v}_1^H \vec{A}_{X_k}^D \vec{v}_1,\cdots, \vec{v}_{n_{\Delta}^{X_k}}^H \vec{A}_{X_k}^D \vec{v}_{n_{\Delta}^{X_k}}\}\vec{w}_{\Delta}.
\end{align*}

From this, using \eqref{chap2-remark-equ-general-eig-pro}, \eqref{chap2-lambda}, \eqref{chap2-OrthProp-Tk-Delta}, \eqref{chap2-OrthProp-Tk-Delta-Pi}, \eqref{chap2-parallel-property},  \eqref{chap2-Sij-def-2}, \eqref{chap2-Esp-Barwk-chi-Ti} and \eqref{def-normAi}, and noting $\vec{B}_{X_k}$ is Hermitian semi-positive definite, it follows that
\begin{align*}
\sum\limits_{s \in \mathcal{N}_{\mathcal{X}_k} \backslash \{r\}} (|w_{D,\Delta}^{X_k,r,s}|_{a_r}^2 +
|\tilde{w}_{D,\Delta}^{X_k,s,r}|_{a_s}^2)
&\le \vec{w}_{\Delta}^H diag\{|\lambda_1| \vec{v}_1^H \vec{B}_{X_k} \vec{v}_1,\cdots, |\lambda_{n_{\Delta}^{X_k}}| \vec{v}_{n_{\Delta}^{X_k}}^H \vec{B}_{X_k} \vec{v}_{n_{\Delta}^{X_k}}\}\vec{w}_{\Delta}\\\nonumber
&\le \Theta_X \vec{w}_{\Delta}^H diag\{\vec{v}_1^H \vec{B}_{X_k} \vec{v}_1,\cdots, \vec{v}_{n_{\Delta}^{X_k}}^H \vec{B}_{X_k} \vec{v}_{n_{\Delta}^{X_k}}\}\vec{w}_{\Delta}\\
&= \Theta_X \vec{w}_{\Delta}^H (\vec{T}_{\Delta}^{X_k})^H \vec{B}_{X_k} \vec{T}_{\Delta}^{X_k} \vec{w}_{\Delta}\\
&\le \Theta_X (\vec{T}_{\Delta}^{X_{k}}\vec{w}_{\Delta} + \vec{T}_{\Pi}^{X_{k}}\vec{w}_{\Pi})^H \vec{B}_{X_k} (\vec{T}_{\Delta}^{X_{k}}\vec{w}_{\Delta} + \vec{T}_{\Pi}^{X_{k}}\vec{w}_{\Pi})\\\nonumber
&\le \Theta_X (\vec{T}_{\Delta}^{X_{k}}\vec{w}_{\Delta} + \vec{T}_{\Pi}^{X_{k}}\vec{w}_{\Pi})^H
\vec{\bar{S}}_{X_{k}}^{(r)} (\vec{T}_{\Delta}^{X_{k}}\vec{w}_{\Delta} + \vec{T}_{\Pi}^{X_{k}}\vec{w}_{\Pi})\\\nonumber
&= \Theta_X a_r(\bar{w}_{X_k,\Delta}^{(r)} + \bar{w}_{X_k,\Pi}^{(r)}, \bar{w}_{X_k,\Delta}^{(r)} + \bar{w}_{X_k,\Pi}^{(r)})\\
&= \Theta_X |\bar{w}_{X_k,\Delta}^{(r)} + \bar{w}_{X_k,\Pi}^{(r)}|_{a_r}^2,
\end{align*}
then \eqref{chap2-ZJ-Tphik-TDphik-Prop} holds.
\end{proof}

\begin{remark}
Although the condition number of the preconditioned operator can be controlled by the user-defined tolerances,
the cost for forming the two classes of generalized eigenvalue problems is quite expensive.
Therefore, similar to \cite{KlawonnRadtke16:75}, we use economic-version to enhance the efficiency of the proposed method.
\end{remark}

From this, we can decompose the function spaces $W_{X_k}$ and $W_{X_k}^{(\nu)} (X = F, E)$ as
\begin{eqnarray}\label{chap2-ZJ-Def-Wk}
W_{X_k} = W_{X_k,\Delta}\oplus W_{X_k,\Pi},~
W_{X_k}^{(\nu)} = W_{X_k,\Delta}^{(\nu)} \oplus W_{X_k,\Pi}^{(\nu)},
\end{eqnarray}
where
\begin{align*}
&W_{X_k,\zeta} = \mathrm{span}\{\phi^{X_k}_{\zeta,1},\cdots,\phi^{X_k}_{\zeta,n_{\zeta}^{X_k}}\},~W_{X_k,\zeta}^{(\nu)} = \mathrm{span}\{\phi^{X_k,\nu}_{\zeta,1},\cdots,\phi^{X_k,\nu}_{\zeta,n_{\zeta}^{X_k}}\},~\zeta = \Delta,\Pi, ~\nu \in \mathcal{N}_{X_k}.
\end{align*}

For any given subdomain $D_r$, we define the function spaces
\begin{align}\label{chap2-def-Wr-DP}
&W_{\Delta}^{(r)} =(\oplus_{k\in \mathcal{M}_F^{(r)}} W_{F_k,\Delta}^{(r)}) \oplus (\oplus_{k\in \mathcal{M}_E^{(r)}} W_{E_k,\Delta}^{(r)}), \\\label{chap2-def-Wr-DP-Pi}
&W_{\Pi}^{(r)} = (\oplus_{k \in \mathcal{M}_F^{(r)}} W_{F_k,\Pi}^{(r)}) \oplus (\oplus_{k \in \mathcal{M}_E^{(r)}} W_{E_k,\Pi}^{(r)}) \oplus (\oplus_{k \in \mathcal{M}_V^{(r)}} W_{V_k}^{(r)}),
\end{align}
and let
\begin{eqnarray}\label{chap2-def-Wr}
\tilde{W}^{(r)} = W_{\Delta}^{(r)} \oplus W_{\Pi}^{(r)}.
\end{eqnarray}

Using the function spaces $W_{X_k,\zeta} (k=1,\cdots,N_X, \zeta = \Delta, \Pi, X = F, E)$ and $W_{V_k} (k=1,\cdots,N_V)$, we can define
\begin{eqnarray}\label{chap2-Zj-Def-WDelPi}
W_{\Delta}=(\oplus_{k=1}^{N_F} W_{F_k,\Delta}) \oplus (\oplus_{k=1}^{N_E} W_{E_k,\Delta}) ,~W_{\Pi} = (\oplus_{k=1}^{N_F} W_{F_k,\Pi}) \oplus (\oplus_{k=1}^{N_E} W_{E_k,\Pi}) \oplus (\oplus_{k=1}^{N_V} W_{V_k}),
\end{eqnarray}
where the function space $W_{\Pi}$ is the so-called primal space, and the corresponding basis functions $\{\phi_{\Pi,l}^{X_k}\} (X = F,E)$ and $\{\phi_{l}^{V_k}\}$ are the primal basis functions.

Similarly, by using the function spaces $W_{\Delta}^{(r)} (r=1,\cdots,N_d)$, we can define the so-called dual space
\begin{eqnarray}\label{chap2-ZJ-TildeW-Delta-Decomp}
\tilde{W}_{\Delta} = \oplus_{r=1}^{N_d} W_{\Delta}^{(r)}.
\end{eqnarray}

From \eqref{chap2-def-W}, \eqref{chap2-ZJ-Def-Wk} and \eqref{chap2-Zj-Def-WDelPi}, we can decompose the function space $\hat{W}$ which the Schur complement system depends on
into
\begin{eqnarray}\label{chap2-Zj-Decomp-HatW}
\hat{W} = W_{\Delta} \oplus W_{\Pi}.
\end{eqnarray}

By using the function spaces $W_{\Pi}$ and $\tilde{W}_{\Delta}$ defined by \eqref{chap2-Zj-Def-WDelPi} and \eqref{chap2-ZJ-TildeW-Delta-Decomp}, respectively, we can define a partial coupling function space which the adaptive BDDC preconditioner is based on
\begin{eqnarray}\label{chap2-ZJ-TildeW-Decomp}
\tilde{W} = \tilde{W}_{\Delta} \oplus W_{\Pi}.
\end{eqnarray}
We can see that the functions belonging to $\tilde{W}$ are continuous at the primal level and discontinuous elsewhere on the $\mathcal{F}_I$.

For convenience, we call the process of generating the required function spaces by using
the function space $V_p(\mathcal{T}_h)$ as~{\bf Setup algorithm}.

From now on, the function space $\hat{W}$ defined by \eqref{chap2-Zj-Decomp-HatW} will be adopted,
and we denote the corresponding Schur complement variational problem as (we still use the same notation as \eqref{chap2-def-schur-system} when no confusion can arise): find  $\hat{w} \in \hat{W}$ such that
\begin{align}\label{chap2-ZJ-Schur-System}
(\hat{S} \hat{w}, \hat{v}) = \mathcal{L}(\hat{v}),~~\forall \hat{v} \in \hat{W}.
\end{align}

In the following subsections we will design and analysis the adaptive BDDC preconditioner for solving the Schur complement system \eqref{chap2-ZJ-Schur-System}.

\subsubsection{BDDC preconditioner}

In order to derive an adaptive BDDC algorithm in variational form for solving the Schur complement system \eqref{chap2-ZJ-Schur-System},
some operators are introduced firstly.


%
We recall that $\phi^{(r)}\in \tilde{W}^{(r)}$ is the truncated function of the basis $\phi \in \hat{W}$ in $D_r$.
For any given $\phi^{(r)} \in \tilde{W}^{(r)} (r=1,\cdots,N_d)$, we introduce a basis transformation operator $T_r: \tilde{W}^{(r)} \rightarrow \hat{W}$, which is a linear operator and satisfies
\begin{align}\label{chap2-Basis-Tran-2}
T_r \phi^{(r)} = \phi,~~\forall~ r = 1,\cdots,N_d.
\end{align}
Conversely, for any given basis function $\phi \in \hat{W}$, a basis transformation operator $T^{r}: \hat{W} \rightarrow \tilde{W}^{(r)}$ is defined as
\begin{align}\label{chap2-Basis-Tran-1}
T^{r} \phi = \phi^{(r)},~~\forall~ r = 1,\cdots,N_d.
\end{align}

According to the decomposition \eqref{chap2-ZJ-TildeW-Decomp},
\eqref{chap2-Zj-Def-WDelPi} and \eqref{chap2-ZJ-TildeW-Delta-Decomp} of $\tilde{W}$, it follows that
\begin{eqnarray}\label{chap2-ZJ-TildeW-FuncDecomp}
\tilde{\zeta} = \sum\limits_{r=1}^{N_d} (\sum\limits_{k\in
\mathcal{M}_F^{(r)}} \tilde{\zeta}_{F_k,\Delta}^{(r)} + \sum\limits_{k\in
\mathcal{M}_E^{(r)}} \tilde{\zeta}_{E_k,\Delta}^{(r)}) + \sum\limits_{k=1}^{N_F}
\tilde{\zeta}_{F_k,\Pi} + \sum\limits_{k=1}^{N_E}
\tilde{\zeta}_{E_k,\Pi} + \sum\limits_{k=1}^{N_V}
\tilde{\zeta}_{V_k},
\end{eqnarray}
where $\tilde{\zeta}_{X_k,\Delta}^{(r)}\in W_{X_k,\Delta}^{(r)}$,
$\tilde{\zeta}_{X_k,\Pi}\in W_{X_k,\Pi} (X = F,E)$, $\tilde{\zeta}_{V_k} \in W_{V_k}$ for any given function $\tilde{\zeta}\in \tilde{W}$.

Using the above decomposition and $a_r(\cdot,\cdot) (r=1,\cdots,N_d)$ defined in \eqref{chap4-def-AUV}, we can obtain that for any given $\tilde{u},
\tilde{v} \in \tilde{W}$, we can define a semilinear form $\tilde{a}(\cdot,\cdot)$ and the corresponding partially assembled Schur complement operator $\tilde{S}: \tilde{W} \rightarrow \tilde{W}$ satisfing
\begin{eqnarray}\label{chap2-def-operator-tilde-A}
(\tilde{S} \tilde{u}, \tilde{v}) :=\tilde{a}(\tilde{u},
\tilde{v}) := \sum\limits_{r=1}^{N_d} a_r(\tilde{u}^{(r)},
\tilde{v}^{(r)}),~~\forall \tilde{u}, \tilde{v} \in
\tilde{W},
\end{eqnarray}
where
\begin{eqnarray}\label{chap2-def-operator-tilde-S}
\tilde{\zeta}^{(r)} = \sum\limits_{k \in \mathcal{M}_F^{(r)}}
(\tilde{\zeta}_{F_k,\Delta}^{(r)} + T^{r}
\tilde{\zeta}_{F_k,\Pi}) + \sum\limits_{k \in \mathcal{M}_E^{(r)}}
(\tilde{\zeta}_{E_k,\Delta}^{(r)} + T^{r}
\tilde{\zeta}_{E_k,\Pi}) + \sum\limits_{k\in \mathcal{M}_V^{(r)}} T^{r}
\tilde{\zeta}_{V_k},~~\tilde{\zeta} = \tilde{u}, \tilde{v}.
\end{eqnarray}

From the property that the semilinear form $\tilde{a}(\cdot,\cdot)$ is Hermitian positive definite in $\tilde{W}$, we can see the operator $\tilde{S}$ is also Hermitian positive definite.

Let $I_{\Gamma}: \tilde{W} \rightarrow \hat{W}$ be a linear operator which satisfies
\begin{align}\label{chap2-IGamma-1}
& I_{\Gamma} \phi_{\Delta,l}^{X_k,\nu} = \phi_{\Delta,l}^{X_k},~l=1,\cdots,n_{\Delta}^{X_k}, \nu \in \mathcal{N}_{\mathcal{X}_k}, k=1,\cdots,N_X, X=F,E, \\\label{chap2-IGamma-2}
& I_{\Gamma} \phi_{\Pi,l}^{X_k} = \phi_{\Pi,l}^{X_k},~l=1,\cdots,n_{\Pi}^{X_k}, k=1,\cdots,N_X, X=F,E, \\\label{chap2-IGamma-3}
& I_{\Gamma} \phi_{l}^{V_k} = \phi_{l}^{V_k},~l=1,\cdots,n_{V_k}, k=1,\cdots,N_V.
\end{align}


Using the scaling operators defined in \eqref{chap2-def-D-operator-matrix}, a linear operator $D_{\Delta}^{(r)}: W_{\Delta}^{(r)} \rightarrow W_{\Delta}^{(r)}$ for any given subdomain $D_r$ can be defined as
\begin{eqnarray}\label{chap2-ZJ-Def-Oper-HatI-DelDi}
D_{\Delta}^{(r)}
=\sum\limits_{k \in \mathcal{M}_F^{(r)}}
D_{F_k}^{(r)} R_{F_k,\Delta}^{(r)} +
\sum\limits_{k \in \mathcal{M}_E^{(r)}}
D_{E_k}^{(r)} R_{E_k,\Delta}^{(r)},
\end{eqnarray}
where $R_{X_k,\Delta}^{(r)} (X = F, E)$ are the restriction operators from $W_{\Delta}^{(r)}$ to $W_{X_k,\Delta}^{(r)}$.

According to \eqref{chap2-ZJ-Def-Oper-HatI-DelDi}, it is easy to verify that
\begin{eqnarray}\label{chap2-def-R-property-1}
D_{\Delta}^{(r)} w =  D_{X_k}^{(r)} w,~\forall w  \in W_{X_k,\Delta}^{(r)}
\end{eqnarray}
for any given $k \in \mathcal{M}_X^{(r)} (X = F, E)$.

By using the operators $D_{\Delta}^{(r)} (r=1,\cdots,N_d)$ defined in \eqref{chap2-ZJ-Def-Oper-HatI-DelDi}, we can introduce the scaling operator
$\tilde{D}$ from $\tilde{W}$ to $\tilde{W}$, which satisfies
\begin{align}\label{def-tildeD}
\tilde{D} = \sum\limits_{r=1}^{N_d} D_{\Delta}^{(r)}\tilde{R}_{\Delta}^{(r)} + \tilde{R}_{\Pi},
\end{align}
where $\tilde{R}_{\Delta}^{(r)}$ is the restriction operator from $\tilde{W}$ to $W_{\Delta}^{(r)}$ and $\tilde{R}_{\Pi}$ is the restriction operator from $\tilde{W}$ to $W_{\Pi}$.

With the above preparations, using the sesquilinear form $\tilde{a}(\cdot,\cdot)$,
an adaptive BDDC operator $M_{BDDC}^{-1}: \hat{W} \rightarrow \hat{W}$ for solving the Schur complement system \eqref{chap2-ZJ-Schur-System}  can then be given as the following algorithm.
\begin{algorithm}\label{chap2-ZJ-algorithm-variation-BDDC-domain-simple}~

For any given function $g\in \hat{W}$, $u_g = M_{BDDC}^{-1} {g} \in \hat{W}$ can be obtained by the following two steps:
\begin{description}
\item[Step 1.] Find $w \in \tilde{W}$, such that
\begin{align*}
\tilde{a}(w, v) = ((I_{\Gamma} \tilde{D})^H g, v),~~\forall v \in \tilde{W},
\end{align*}
where $(\diamond)^H$ denotes the complex conjugate transpose operator of $\diamond$.

\item[Step 2.] Let
\begin{align*}
u_g = I_{\Gamma} \tilde{D} w.
\end{align*}

\end{description}
\end{algorithm}

From this, combining with the definition \eqref{chap2-def-operator-tilde-A} of $\tilde{S}$ and noting that $\tilde{S}$ is Hermitian positive definite, the preconditioner operator $M_{BDDC}^{-1}$ can be showed as
\begin{align}\label{chap2-ZJ-Expression-MBDDC-Inv}
M_{BDDC}^{-1} = (I_{\Gamma} \tilde{D}) \tilde{S}^{-1} (I_{\Gamma} \tilde{D})^H.
\end{align}

In order to facilitate parallel programming, we want to give an equivalent description of Algorithm \ref{chap2-ZJ-algorithm-variation-BDDC-domain-simple}.
To this end, we need to introduce some other operators firstly.

Let $\tilde{I}_{\Delta}^{(r)}$ ($r=1,\cdots,N_d$) be the prolongation operators from $W_{\Delta}^{(r)}$ to $\tilde{W}$,
we introduce a linear operator from $\tilde{W}$ to $\tilde{W}_{\Delta}$ as
\begin{eqnarray*}
\tilde{S}_{\Delta}^{-1}= \sum\limits_{r=1}^{N_d}
(\tilde{S}^{(r)}_{\Delta\Delta})^{-1}(\tilde{I}_{\Delta}^{(r)})^H = \sum\limits_{r=1}^{N_d}
\tilde{I}_{\Delta}^{(r)}
(\tilde{S}^{(r)}_{\Delta\Delta})^{-1}(\tilde{I}_{\Delta}^{(r)})^H,
\end{eqnarray*}
where
\begin{align*}
\tilde{S}^{(r)}_{\Delta\Delta} =
(\tilde{I}_{\Delta}^{(r)})^H \tilde{S}
\tilde{I}_{\Delta}^{(r)},~r=1,\cdots,N_d.
\end{align*}

Using $\tilde{S}_{\Delta}^{-1}$, a linear operator from $W_{\Pi}$ to $\tilde{W}$ can be defined as
\begin{align}\label{chap2-ZJ-Repres-OPi-Eqiv}
O_{\tilde{\Pi}}= \tilde{I}_{\Pi} - \tilde{S}_{\Delta}^{-1} \tilde{S}
\tilde{I}_{\Pi} = (I - \tilde{S}_{\Delta}^{-1} \tilde{S}) \tilde{I}_{\Pi},
\end{align}
where $I: \tilde{W} \rightarrow \tilde{W}$ is an identity operator, and $\tilde{I}_{\Pi}: W_{\Pi} \rightarrow \tilde{W}$ is a prolongation operator.

Therefore, by using the expression \eqref{chap2-ZJ-Expression-MBDDC-Inv} of the preconditioner operator $M_{BDDC}^{-1}$,
after detailed deduction, we can arrived at the equivalent description of Algorithm \ref{chap2-ZJ-algorithm-variation-BDDC-domain-simple} as follows:
\begin{algorithm}\label{chap2-ZJ-algorithm-variation-BDDC-domain}~

For any given function $g\in \hat{W}$, $u_g = M_{BDDC}^{-1} {g}
\in \hat{W}$ can be obtained by the following steps:
\begin{description}
\item[Step 1.] Find $u^{\Delta,r}_a \in W_{\Delta}^{(r)}(r=1,\cdots,N_d)$ in parallel such that
\begin{align*}
a_r(u^{\Delta,r}_a, v) = ((T_r D_{\Delta}^{(r)})^H {g}, v),~~\forall v \in W_{\Delta}^{(r)},
\end{align*}
and compute
\begin{align*}
u_{\Delta,a} = \sum\limits_{r=1}^{N_d} T_r D_{\Delta}^{(r)} u^{\Delta,r}_a \in \hat{W},
\end{align*}
where the operators  $T_r$ and $D_{\Delta}^{(r)}$  are defined in \eqref{chap2-Basis-Tran-2} and \eqref{chap2-ZJ-Def-Oper-HatI-DelDi}.

\item[Step 2.]  Find $u_{\Pi} \in W_{\Pi}$ such that
\begin{align*}
\tilde{a}(O_{\tilde{\Pi}} u_{\Pi}, O_{\tilde{\Pi}}v) = (g, v) - \tilde{a}(\sum\limits_{r=1}^{N_d} u^{\Delta,r}_a,v),~~\forall v\in
W_{\Pi},
\end{align*}
where the operator $O_{\tilde{\Pi}}$ is defined in \eqref{chap2-ZJ-Repres-OPi-Eqiv}.

\item[Step 3.] Compute $u^{\Delta,r}_{b} \in W_{\Delta}^{(r)}(r=1,\cdots,N_d)$ in parallel by
\begin{align*}
a_r(u^{\Delta,r}_{b}, v) = - a_r(u_{\Pi}, v),~~\forall v \in W_{\Delta}^{(r)},
\end{align*}
and set
\begin{align*}
u_{\Delta,b} = \sum\limits_{r=1}^{N_d} T_r D_{\Delta}^{(r)} u^{\Delta,r}_b \in \hat{W}.
\end{align*}

\item[Step 4.] Let
\begin{align*}
u_g = u_{\Delta,a} + u_{\Pi} + u_{\Delta,b}.
\end{align*}

\end{description}

\end{algorithm}

Since Algorithm \ref{chap2-ZJ-algorithm-variation-BDDC-domain} is a two-level algorithm, we will call Algorithm \ref{chap2-ZJ-algorithm-variation-BDDC-domain} or its equivalent algorithm (Algorithm \ref{chap2-ZJ-algorithm-variation-BDDC-domain-simple}) as {\bf two-level adaptive BDDC algorithm}.

Furthermore, from Algorithm \ref{chap2-ZJ-algorithm-variation-BDDC-domain-simple} or Algorithm \ref{chap2-ZJ-algorithm-variation-BDDC-domain}, an algorithm for solving the original variational problem \eqref{115-dis} can be obtained.
\begin{algorithm}\label{chap2-ZJ-algorithm-variation-BDDC-domain-II}~

\begin{description}
\item[Step 1.]  By using the Krylov subspace iteration method based on preconditioner $M_{BDDC}^{-1}$, we can find $u_{\Gamma} \in \hat{W}$ such that
\begin{align*}
a(u_{\Gamma}, v) = \mathcal{L}(v),~\forall v \in \hat{W}.
\end{align*}

\item[Step 2.]  Compute $u^{(r)}_I \in V_I^{(r)}(r=1,\cdots,N_d)$ in parallel by
\begin{align*}
a_r(u^{(r)}_I, v) = \mathcal{L}(v) - a_r(u_{\Gamma}, v),~~\forall v \in V_I^{(r)}.
\end{align*}

\item[Step 3.] Set
\begin{align*}
u = \sum\limits_{r=1}^{N_d} u^{(r)}_I + u_{\Gamma}.
\end{align*}
\end{description}
\end{algorithm}
For the sake of description convenience, we call the above algorithm as {\bf two-level adaptive BDDC solver algorithm}.

\subsection{Multilevel extensions}

It is well known that the number of primal dofs in a nonoverlapping domain decomposition method will increase significantly as the number of subdomain increases.
The direct method is very expensive to solve the corresponding coarse problem.
In particular, for the Helmholtz problem with high-wave number, this phenomenon becomes more obvious with the increase of wave number.
The form of the coarse problem naturally leads to a multilevel extension of the BDDC algorithm \cite{Dohrmann03:246},
this can be used to overcome this difficulty efficiently.

In the following, based on Algorithm \ref{chap2-ZJ-algorithm-variation-BDDC-domain-II}, a rough description of the multi-level adaptive BDDC algorithm for solving the original variational problem \eqref{115-1} is given.

Firstly, we generate the mesh information for each level. Let $L$ be the total number of levels, and set the finest level by $s=0$.
We denote by $\mathcal{T}_h^s (s=0,\cdots,L-1)$ the mesh generation in the $s$-th level, respectively.
Let $\mathcal{T}_d^s (s=0,\cdots,L-2)$ be the subdomain generation in the $s$-th level, and satisfy $\mathcal{T}_h^0 = \mathcal{T}_h$, $\mathcal{T}_d^0 = \mathcal{T}_d$, $\mathcal{T}_h^{s+1} = \mathcal{T}_d^{s} (s \ge 0)$, $\mathcal{T}_d^{s}$ and $\mathcal{T}_h^{s} (s \ge 1)$ are nested.
Secondly, we generate the function spaces required by each level. Take the function space $V(\mathcal{T}_h^1)$ as the coarse space $W_{\Pi}$ (defined in \eqref{chap2-Zj-Def-WDelPi}) of the $0$-th level,
and regard $V(\mathcal{T}_h^1)$ as $V(\mathcal{T}_h)$ in the setup algorithm, we can obtain the corresponding coarse space $W_{\Pi}^1$ and other function spaces required in the $1$-th level;
In general, take the function spaces $V(\mathcal{T}_h^{s+1}) (s \ge 0)$ as the coarse space $W_{\Pi}^s$ of the $s$-th level,
and regard $V(\mathcal{T}_h^{s+1})$ as $V(\mathcal{T}_h)$ in the setup algorithm, we can reach the corresponding coarse space $W_{\Pi}^{s+1}$ and the other function spaces in the $s+1$-th level;
This process is executed sequentially until $s <= L-2$, we obtain the required function spaces on each level.
Based on the above preparations, and regard Algorithm \ref{chap2-ZJ-algorithm-variation-BDDC-domain-II} as an iterative algorithm from $s$-th level to $s$+$1$-th ($s = 0$) level,  a multi-level adaptive BDDC algorithm with the total number $L$ is obtained by calling the algorithm recursively until $s < L-1$.

Such an approach requires less memory than a two-level method with a direct coarse solver, and it can lead to highly scalable algorithms. Theoretically, the condition number of multilevel BDDC method depends multiplicatively on the condition number of each level problems \cite{MandelSousedik08:55}.

In the next section, we will derive the condition number estimation of the two-level adaptive BDDC preconditioned operator.

\section{Theoretical estimates}\setcounter{equation}{0}

In this section, we will provide the condition number estimate for the BDDC preconditioned operator with adaptive coarse space.

Let $\tilde{R}_{\Gamma}: \hat{W} \rightarrow \tilde{W}$ be the natural injection from $\hat{W}$ to $\tilde{W}$. It follows from \eqref{chap2-def-operator-hat-S} and \eqref{chap2-def-operator-tilde-A} that
\begin{align}\label{chap2-ZJn-Relat-hatS-tildeS}
 \hat{S} = (\tilde{R}_{\Gamma})^H \tilde{S}
 \tilde{R}_{\Gamma}.
\end{align}

Using \eqref{chap2-ZJn-Relat-hatS-tildeS} and \eqref{chap2-ZJ-Expression-MBDDC-Inv}, we can obtain the preconditioned operator associated with the Schur complement system \eqref{chap2-ZJ-Schur-System} as
\begin{align}\label{chap2-ZJ-oper-hatG}
\hat{G} := M_{BDDC}^{-1} \hat{S} = (I_{\Gamma} \tilde{D}) \tilde{S}^{-1} (I_{\Gamma} \tilde{D})^H
(\tilde{R}_{\Gamma})^H \tilde{S}
\tilde{R}_{\Gamma}.
\end{align}

In the following, we will derive the estimation of the minimum eigenvalue of the preconditioned operator $\hat{G}$.
For this purpose, the following lemma is given firstly.
\begin{lemma}\label{chap2-lemma-5-1}
Let $\tilde{R}_{\Gamma}: \hat{W} \rightarrow \tilde{W}$ be the natural injection from $\hat{W}$ to $\tilde{W}$, the linear operator $I_{\Gamma}: \tilde{W} \rightarrow \hat{W}$ is given by \eqref{chap2-IGamma-1}--\eqref{chap2-IGamma-3}, $\tilde{D}:\tilde{W} \rightarrow \tilde{W}$ is defined as \eqref{def-tildeD}, then we have
\begin{align}\label{chap2-lemma-R-hatW-tildeW-trans}
I_{\Gamma} \tilde{D} \tilde{R}_{\Gamma}  = I,
\end{align}
where $I: \hat{W} \rightarrow \hat{W}$ is an identity operator.
\end{lemma}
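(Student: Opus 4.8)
The plan is to verify the identity $I_\Gamma \tilde{D} \tilde{R}_\Gamma = I$ on $\hat{W}$ by testing it against the three families of basis functions that span $\hat{W}$, namely the dual face/edge functions $\phi_{\Delta,l}^{X_k}$ ($X=F,E$), the primal face/edge functions $\phi_{\Pi,l}^{X_k}$, and the vertex functions $\phi_l^{V_k}$, since by \eqref{chap2-Zj-Decomp-HatW} together with \eqref{chap2-ZJ-Def-Wk} and \eqref{chap2-Zj-Def-WDelPi} these exhaust a basis of $\hat{W}$. The heart of the matter is the dual part, because the primal and vertex components are carried through $\tilde{R}_\Gamma$, $\tilde{D}$, and $I_\Gamma$ essentially by identity maps (the $\tilde{R}_\Pi$ piece of $\tilde{D}$ in \eqref{def-tildeD} and the relations \eqref{chap2-IGamma-2}--\eqref{chap2-IGamma-3}), so those cases should reduce to a short bookkeeping check.

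For the dual case I would proceed as follows. Fix a glob $\mathcal{X}_k$ ($X=F$ or $E$) and a dual basis function $\phi_{\Delta,l}^{X_k} \in W_{X_k,\Delta} \subset \hat{W}$. First, apply $\tilde{R}_\Gamma$: since $\tilde{R}_\Gamma$ is the natural injection into $\tilde{W} = \tilde{W}_\Delta \oplus W_\Pi$, and $\tilde{W}_\Delta = \oplus_r W_\Delta^{(r)}$, the image of $\phi_{\Delta,l}^{X_k}$ is the collection of its truncations $\phi_{\Delta,l}^{X_k,\nu}$ over the subdomains $\nu \in \mathcal{N}_{\mathcal{X}_k}$, using the restriction relation \eqref{chap2-rel-phichik-phichikv}. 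Next, apply $\tilde{D}$: on each $W_\Delta^{(r)}$ the operator acts through $D_\Delta^{(r)}$, which by \eqref{chap2-def-R-property-1} coincides with $D_{X_k}^{(r)}$ on $W_{X_k,\Delta}^{(r)}$, producing the scaled pieces $(\vec{D}_{X_k}^{(\nu)})^T$ applied to the truncations. Finally, apply $I_\Gamma$: by \eqref{chap2-IGamma-1} each $\phi_{\Delta,l}^{X_k,\nu}$ is mapped back to $\phi_{\Delta,l}^{X_k}$, so after $I_\Gamma$ one obtains $\sum_{\nu \in \mathcal{N}_{\mathcal{X}_k}} D_{X_k}^{(\nu)} \phi_{\Delta,l}^{X_k}$, which equals $\phi_{\Delta,l}^{X_k}$ by the partition-of-unity property \eqref{chap2-def-D-trans} of the scaling operators. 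This is the key computation, and the main obstacle is being careful with the identifications: one must keep straight that $I_\Gamma$ does not merely "forget" the subdomain index but sums the corresponding images, and that the scaling matrices appearing in \eqref{chap2-def-D-operator-matrix} are applied consistently with respect to the basis $\Phi_{X_k}^{(\nu)}$ versus $\Phi_{X_k}$.

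For the primal components, a primal basis function $\phi_{\Pi,l}^{X_k} \in W_\Pi$ is injected by $\tilde{R}_\Gamma$ into the $W_\Pi$ summand of $\tilde{W}$; the operator $\tilde{D}$ acts as the identity $\tilde{R}_\Pi$ on that summand by \eqref{def-tildeD}; and $I_\Gamma$ returns $\phi_{\Pi,l}^{X_k}$ by \eqref{chap2-IGamma-2}. The vertex functions $\phi_l^{V_k}$ are handled identically using \eqref{chap2-IGamma-3}, since $W_{V_k} \subset W_\Pi$. Assembling the three cases by linearity gives $I_\Gamma \tilde{D} \tilde{R}_\Gamma \phi = \phi$ for every basis function $\phi$ of $\hat{W}$, and hence $I_\Gamma \tilde{D} \tilde{R}_\Gamma = I$ on $\hat{W}$. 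I expect the only real subtlety is the one noted above, so the write-up should emphasize the dual case and dispatch the primal and vertex cases briefly.
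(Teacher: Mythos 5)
Your proposal is correct and follows essentially the same route as the paper: the paper likewise decomposes a general $\hat{u}\in\hat{W}$ into its $\Delta$, $\Pi$ and vertex components, pushes them through $\tilde{R}_{\Gamma}$, $\tilde{D}$ and $I_{\Gamma}$, and concludes with the partition-of-unity property \eqref{chap2-def-D-trans} of the scaling operators, exactly as in your key computation (working with basis functions rather than a general element is an immaterial difference by linearity).
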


\begin{proof}
For any given function $\hat{u} \in \hat{W}$,  we can decompose $\hat{u}$ by using \eqref{chap2-Zj-Decomp-HatW} as
\begin{align}\label{chap2-decomposition-hatw}
\hat{u} = \sum\limits_{k=1}^{N_F} (\hat{u}_{F_k,\Delta} + \hat{u}_{F_k,\Pi}) + \sum\limits_{k=1}^{N_E} (\hat{u}_{E_k,\Delta} + \hat{u}_{E_k,\Pi}) + \sum\limits_{k=1}^{N_V} \hat{u}_{V_k},
\end{align}
where $\hat{u}_{F_k,\Delta} \in W_{F_k,\Delta}$, $\hat{u}_{F_k,\Pi}\in W_{F_k,\Pi}$, $\hat{u}_{E_k,\Delta} \in W_{E_k,\Delta}$, $\hat{u}_{E_k,\Pi}\in W_{E_k,\Pi}$ and ~$\hat{u}_{V_k} \in W_{V_k}$.

From this, and combining with the definitions of $\tilde{R}_{\Gamma}$ and $T^r(r=1,\cdots,N_d)$, we can see
\begin{align}\label{chap2-def-I-hatW-tildeW-hatw}
\tilde{R}_{\Gamma} \hat{u}
= \sum\limits_{r=1}^{N_d} \sum\limits_{k \in \mathcal{M}_F^{(r)}} T^{r} \hat{u}_{F_k,\Delta}
+ \sum\limits_{r=1}^{N_d} \sum\limits_{k \in \mathcal{M}_E^{(r)}} T^{r} \hat{u}_{E_k,\Delta}
+ \sum\limits_{k=1}^{N_F} \hat{u}_{F_k,\Pi}
+ \sum\limits_{k=1}^{N_E} \hat{u}_{E_k,\Pi}
+ \sum\limits_{k=1}^{N_V} \hat{u}_{V_k}.
\end{align}

Using the definitions \eqref{chap2-IGamma-1}, \eqref{chap2-IGamma-2}, \eqref{chap2-IGamma-3} and \eqref{def-tildeD} of the operator $I_{\Gamma}$ and $\tilde{D}$, the definitions of
the restriction operators $\tilde{R}_{\Delta}^{(r)}$ $(r=1,\cdots,N_d)$ and $\tilde{R}_{\Pi}$, and combining with \eqref{chap2-def-I-hatW-tildeW-hatw} and \eqref{chap2-decomposition-hatw}, it follows
\allowdisplaybreaks
\begin{align*} %
I_{\Gamma} \tilde{D} \tilde{R}_{\Gamma} \hat{{u}}
&= \sum\limits_{r=1}^{N_d} \sum\limits_{k \in \mathcal{M}_F^{(r)}} T_r D_{F_k}^{(r)} T^{r} \hat{u}_{F_k,\Delta}
 + \sum\limits_{r=1}^{N_d} \sum\limits_{k \in \mathcal{M}_E^{(r)}} T_r D_{E_k}^{(r)} T^{r} \hat{u}_{E_k,\Delta}
 + \sum\limits_{k=1}^{N_F} \hat{u}_{F_k,\Pi} + \sum\limits_{k=1}^{N_E} \hat{u}_{E_k,\Pi}+ \sum\limits_{k=1}^{N_V} \hat{u}_{V_k}\\\nonumber
&= \sum\limits_{k=1}^{N_F} \sum\limits_{\nu \in \mathcal{N}_{\mathcal{F}_k}} T_{\nu} D_{F_k}^{(\nu)} T^{\nu}  \hat{u}_{F_k,\Delta}
+ \sum\limits_{k=1}^{N_E} \sum\limits_{\nu \in \mathcal{N}_{\mathcal{E}_k}} T_{\nu} D_{E_k}^{(\nu)} T^{\nu}  \hat{u}_{E_k,\Delta}
+ \sum\limits_{k=1}^{N_F} \hat{u}_{F_k,\Pi} + \sum\limits_{k=1}^{N_E} \hat{u}_{E_k,\Pi} + \sum\limits_{k=1}^{N_V} \hat{u}_{V_k}\\\nonumber
&= \sum\limits_{k=1}^{N_F} \hat{u}_{F_k,\Delta} + \sum\limits_{k=1}^{N_E} \hat{u}_{E_k,\Delta} + \sum\limits_{k=1}^{N_F} \hat{u}_{F_k,\Pi} + \sum\limits_{k=1}^{N_E} \hat{u}_{E_k,\Pi} + \sum\limits_{k=1}^{N_V} \hat{u}_{V_k} = \hat{{u}},
\end{align*}

Noting that $\hat{u}$ is any given function of $\hat{W}$, then \eqref{chap2-lemma-R-hatW-tildeW-trans} holds.
\end{proof}

From lemma \ref{chap2-lemma-5-1}, and denote $\tilde{R}_{\Gamma} I_{\Gamma} \tilde{D}$ as the average operator $E_D$, we can see
\begin{eqnarray}\label{chap2-property-ED-5-2-1}
(E_D)^2 = E_D.
\end{eqnarray}

Using Lemma \ref{chap2-lemma-5-1} and noting that $\tilde{S}$ is Hermitian positive definite, similar to the proof of Lemma 3.4 in \cite{BrennerSung07:1429}, we can get
\begin{lemma}\label{chap2-theorem-5-1}
The minimum eigenvalue of the preconditioned operator $\hat{G}$ satisfies
\begin{align}\label{chap2-lambdamin-hatG-oper}
\lambda_{\min}(\hat{G}) \ge 1.
\end{align}
\end{lemma}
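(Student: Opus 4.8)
The plan is to show $\lambda_{\min}(\hat{G}) \ge 1$ by exploiting the fact, recorded in \eqref{chap2-property-ED-5-2-1}, that $E_D := \tilde{R}_{\Gamma} I_{\Gamma} \tilde{D}$ is a projection on $\tilde{W}$, together with the identity \eqref{chap2-ZJn-Relat-hatS-tildeS} relating $\hat{S}$ to $\tilde{S}$ and the positive definiteness of $\tilde{S}$. First I would rewrite the Rayleigh quotient of $\hat{G}$ with respect to the inner product induced by $\hat{S}$. For any $\hat{u} \in \hat{W}$, using \eqref{chap2-ZJ-oper-hatG} and \eqref{chap2-ZJn-Relat-hatS-tildeS}, one has $(\hat{S}\hat{G}\hat{u},\hat{u}) = (\tilde{S}\, \tilde{D}^H I_\Gamma^H (\tilde{R}_\Gamma)^H \tilde{S}\tilde{R}_\Gamma \hat{u},\, I_\Gamma\tilde{D}(\cdots))$ — more cleanly, set $\tilde{w} = \tilde{R}_\Gamma \hat{u} \in \tilde{W}$ and observe that $(\hat{S}\hat{u},\hat{u}) = (\tilde{S}\tilde{w},\tilde{w})$, while $\hat{G}\hat{u} = I_\Gamma\tilde{D}\,\tilde{S}^{-1}(I_\Gamma\tilde{D})^H \tilde{S}\tilde{w}$.

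Next, the key step is a variational/energy-minimization characterization: because $\tilde{S}$ is Hermitian positive definite, the operator $P := \tilde{S}^{-1}(I_\Gamma\tilde{D})^H\tilde{S}$ followed by $I_\Gamma\tilde{D}$ acts, in the $\tilde{S}$-inner product, as the $\tilde{S}$-orthogonal-type projection composed with $\tilde{R}_\Gamma$ on the other side; the crucial algebraic input is Lemma~\ref{chap2-lemma-5-1}, $I_\Gamma\tilde{D}\tilde{R}_\Gamma = I$, which guarantees that $\hat{G}$ restricted to the range of $\tilde{R}_\Gamma$ "sees" the whole of $\hat{u}$. Concretely, I would follow the standard BDDC argument (as in Lemma~3.4 of \cite{BrennerSung07:1429}): write $\langle x,y\rangle_{\tilde{S}} = (\tilde{S}x,y)$, note $E_D^2 = E_D$ and $\tilde{R}_\Gamma\hat{u} = \tilde{R}_\Gamma(I_\Gamma\tilde{D}\tilde{R}_\Gamma)\hat{u} = E_D\tilde{w}$, so $\tilde{w} = E_D\tilde{w}$ lies in the range of $E_D$. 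Then a Cauchy–Schwarz estimate in $\langle\cdot,\cdot\rangle_{\tilde{S}}$ gives
\begin{align*}
(\hat{S}\hat{u},\hat{u})
= \langle \tilde{w},\tilde{w}\rangle_{\tilde{S}}
= \langle E_D\tilde{w},\tilde{w}\rangle_{\tilde{S}}
\le \langle E_D\tilde{w}, E_D\tilde{w}\rangle_{\tilde{S}}^{1/2}\,\langle\tilde{w},\tilde{w}\rangle_{\tilde{S}}^{1/2},
\end{align*}
where the inequality uses that $E_D$ is an $\tilde{S}$-self-adjoint projection onto its range when $\tilde{D}$ and $I_\Gamma,\tilde{R}_\Gamma$ are set up so that $E_D^H\tilde{S} = \tilde{S}E_D$; this yields $(\hat{S}\hat{u},\hat{u}) \le \langle E_D\tilde{w},E_D\tilde{w}\rangle_{\tilde{S}} = (\tilde{S}\,I_\Gamma\tilde{D}\tilde{w},\,I_\Gamma\tilde{D}\tilde{w})$. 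Finally one identifies $(\tilde{S}\,I_\Gamma\tilde{D}\tilde{w},I_\Gamma\tilde{D}\tilde{w})$ with $(\hat{S}\hat{G}\hat{u},\hat{u})$ up to the required algebra, so $(\hat{S}\hat{u},\hat{u}) \le (\hat{S}\hat{G}\hat{u},\hat{u})$ for all $\hat{u}$, i.e. $\lambda_{\min}(\hat{G}) \ge 1$.

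The main obstacle I anticipate is verifying that $E_D$ is genuinely $\tilde{S}$-self-adjoint (or at least that the one-sided estimate above goes through without it), since $\tilde{D}$ is built from the scaling operators $D_{X_k}^{(\nu)}$ and the transformation operators, and the partition-of-unity property \eqref{chap2-def-D-trans} is exactly what makes $I_\Gamma\tilde{D}\tilde{R}_\Gamma = I$; but the compatibility of $E_D$ with the bilinear form $\tilde{a}(\cdot,\cdot)$ needs the block structure of $\tilde{S}$ (no coupling between the $W_\Delta^{(r)}$ blocks across subdomains) to be used carefully. If full self-adjointness is awkward, the fallback is the purely one-sided argument: using $\hat{u} = I_\Gamma\tilde{D}\tilde{R}_\Gamma\hat{u}$ one writes $\hat{u}$ as $I_\Gamma\tilde{D}\tilde v$ with $\tilde v = \tilde{R}_\Gamma\hat{u}$, and since $\tilde{S}^{-1}(I_\Gamma\tilde D)^H\tilde S$ is the $\tilde{S}$-orthogonal projection-like map whose range contains the candidate, the minimizing property of orthogonal projections in the $\tilde{S}$-norm directly gives $\langle\tilde{w},\tilde{w}\rangle_{\tilde S} \le \|I_\Gamma\tilde D \tilde{S}^{-1}(I_\Gamma\tilde D)^H\tilde S\tilde w\|_{\tilde S}^2$, which is precisely $(\hat S\hat G\hat u,\hat u)$. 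Either way the proof is short; the delicate part is book-keeping the operator identities from \eqref{chap2-def-operator-tilde-A}, \eqref{def-tildeD}, and Lemma~\ref{chap2-lemma-5-1}, which is why I would lean directly on the cited Lemma~3.4 of \cite{BrennerSung07:1429} for the template.
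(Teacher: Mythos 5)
Your proposal has the right ingredients in view (Lemma \ref{chap2-lemma-5-1}, the Hermitian positive definiteness of $\tilde{S}$, and a Cauchy--Schwarz estimate in the $\tilde{S}$-inner product — exactly what the paper invokes via Lemma~3.4 of Brenner--Sung), but the central inequality chain is broken: you apply Cauchy--Schwarz to the wrong pair of vectors and end up with a tautology. Setting $\tilde{w}=\tilde{R}_{\Gamma}\hat{u}$, you correctly observe $\tilde{w}=E_D\tilde{w}$; but then your chain
$\langle\tilde{w},\tilde{w}\rangle_{\tilde{S}}=\langle E_D\tilde{w},\tilde{w}\rangle_{\tilde{S}}\le\langle E_D\tilde{w},E_D\tilde{w}\rangle_{\tilde{S}}^{1/2}\langle\tilde{w},\tilde{w}\rangle_{\tilde{S}}^{1/2}$
yields only $\langle\tilde{w},\tilde{w}\rangle_{\tilde{S}}\le\langle E_D\tilde{w},E_D\tilde{w}\rangle_{\tilde{S}}$, and since $E_D\tilde{w}=\tilde{w}$ this is an equality saying nothing. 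Worse, the final identification fails: $I_{\Gamma}\tilde{D}\tilde{w}=I_{\Gamma}\tilde{D}\tilde{R}_{\Gamma}\hat{u}=\hat{u}$, so the quantity you claim equals $(\hat{S}\hat{G}\hat{u},\hat{u})$ is in fact $(\hat{S}\hat{u},\hat{u})$ itself. The quantity $(\hat{S}\hat{G}\hat{u},\hat{u})=\|\tilde{S}^{-1}(I_{\Gamma}\tilde{D})^H\tilde{R}_{\Gamma}^H\tilde{S}\tilde{R}_{\Gamma}\hat{u}\|_{\tilde{S}}^2$ never appears in your estimate, so the conclusion $(\hat{S}\hat{u},\hat{u})\le(\hat{S}\hat{G}\hat{u},\hat{u})$ is not established.

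The repair is to pair $\tilde{w}$ against $z:=\tilde{S}^{-1}(I_{\Gamma}\tilde{D})^H\tilde{R}_{\Gamma}^H\tilde{S}\tilde{R}_{\Gamma}\hat{u}$ rather than against $E_D\tilde{w}$. One computes, using only $I_{\Gamma}\tilde{D}\tilde{R}_{\Gamma}=I$ and adjointness,
\begin{align*}
\langle z,\tilde{w}\rangle_{\tilde{S}}
=\big((I_{\Gamma}\tilde{D})^H\tilde{R}_{\Gamma}^H\tilde{S}\tilde{R}_{\Gamma}\hat{u},\,\tilde{R}_{\Gamma}\hat{u}\big)
=\big(\tilde{S}\tilde{R}_{\Gamma}\hat{u},\,\tilde{R}_{\Gamma}I_{\Gamma}\tilde{D}\tilde{R}_{\Gamma}\hat{u}\big)
=\big(\tilde{S}\tilde{R}_{\Gamma}\hat{u},\,\tilde{R}_{\Gamma}\hat{u}\big)
=\langle\tilde{w},\tilde{w}\rangle_{\tilde{S}},
\end{align*}
and then Cauchy--Schwarz gives $\|\tilde{w}\|_{\tilde{S}}^2\le\|z\|_{\tilde{S}}\|\tilde{w}\|_{\tilde{S}}$, i.e.\ $(\hat{S}\hat{u},\hat{u})=\|\tilde{w}\|_{\tilde{S}}^2\le\|z\|_{\tilde{S}}^2=(\hat{S}\hat{G}\hat{u},\hat{u})$. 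Note that this argument nowhere needs $E_D$ to be $\tilde{S}$-self-adjoint — your worry on that point is a red herring (and in general $E_D$ is not $\tilde{S}$-self-adjoint for adaptive/deluxe scalings); the only structural inputs are $I_{\Gamma}\tilde{D}\tilde{R}_{\Gamma}=I$ and the positive definiteness of $\tilde{S}$.
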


Next, we will derive the estimation of the maximum eigenvalue of $\hat{G}$.

We firstly introduce a jump operator $P_D: \tilde{W} \rightarrow \tilde{W}$,
which is a complementary projector of $E_D$ and satisfies
\begin{align}\label{chap2-def-Pd-0}
P_D = I - E_D,
\end{align}
where $I: \tilde{W} \rightarrow \tilde{W}$ is an identity operator.

Using \eqref{chap2-property-ED-5-2-1} and \eqref{chap2-def-Pd-0}, similar to the
estimation  of the maximum eigenvalue of $\hat{G}$ in the algebraic framework of \cite{KimChung15:571}, it follows
\begin{align}\label{chap2-transform-1}
\lambda_{\max}(\hat{G}) \le \lambda_{\max}(G_d),
\end{align}
where $G_d := (P_D)^H \tilde{S} P_D \tilde{S}^{-1}$.

For any given $\tilde{w} \in \tilde{W}$, using \eqref{chap2-ZJ-TildeW-FuncDecomp}, we have
\begin{align}\label{chap2-def-PD-tildew}
\tilde{w} = \sum\limits_{r=1}^{N_d} (\sum\limits_{k \in \mathcal{M}_F^{(r)}} w_{F_k,\Delta}^{(r)} + \sum\limits_{k \in \mathcal{M}_E^{(r)}} w_{E_k,\Delta}^{(r)}) + w_{\Pi},~~w_{\Pi} := \sum\limits_{k=1}^{N_F} w_{F_k,\Pi} + \sum\limits_{k=1}^{N_E} w_{E_k,\Pi} + \sum\limits_{k=1}^{N_V} w_{V_k},
\end{align}
where
\begin{align}\label{chap2-def-PD-tildew-para}
w_{X_k,\Delta}^{(r)} &= (\vec{w}_{X_k,\Delta}^{(r)})^T \Phi_{\Delta}^{X_k,r} \in W_{X_k,\Delta}^{(r)},~X = F, E, \\\label{chap2-def-PD-tildew-para-pi}
w_{X_k,\Pi} &= (\vec{w}_{X_k,\Pi})^T \Phi_{\Pi}^{X_k} \in W_{X_k,\Pi},~X = F, E,~
w_{V_k} = (\vec{w}_{V_k})^T \Phi_{V_k} \in W_{V_k},
\end{align}
here $\vec{w}_{X_k,\Delta}^{(r)} \in \mathbb{C}^{n_{\Delta}^{X_k}}$, $\vec{w}_{X_k,\Pi} \in \mathbb{C}^{n_{\Pi}^{X_k}} (X = F, E)$, and $\vec{w}_{V_k} \in \mathbb{C}^{n_{V_k}}$.

By using \eqref{chap2-transform-1}, noting that $G_d$ and $\tilde{S}^{-1} P_D^H \tilde{S} P_D$ have the same eigenvalue except 0,
and $\tilde{S}^{-1} P_D^H \tilde{S} P_D$ is symmetry associated with $\tilde{a}(\cdot,\cdot)$, we have
\begin{align*}
\lambda_{\max}(\hat{G}) \le \max\limits_{\tilde{w} \in  \tilde{W} \backslash \{0\}} \frac{\tilde{a}(\tilde{S}^{-1} P_D^H \tilde{S} P_D \tilde{w}, \tilde{w})}{\tilde{a}(\tilde{w}, \tilde{w})}.
\end{align*}
Further, using the definition \eqref{chap2-def-operator-tilde-S} of $\tilde{S}$, the following lemma holds.
\begin{lemma}\label{chap2-lemma-5-2}
The maximum eigenvalue of the preconditioned operator $\hat{G}$ satisfies
\begin{align}\label{chap2-5-26-proof-2-000}
\lambda_{\max}(\hat{G}) \le \max\limits_{\tilde{w} \in  \tilde{W} \backslash \{0\}} \frac{\tilde{a}(P_D \tilde{w}, P_D \tilde{w})}{\tilde{a}(\tilde{w}, \tilde{w})}.
\end{align}
\end{lemma}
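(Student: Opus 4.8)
The plan is to start from the bound already obtained just before the statement, namely
\begin{align*}
\lambda_{\max}(\hat{G}) \le \max\limits_{\tilde{w} \in  \tilde{W} \backslash \{0\}} \frac{\tilde{a}(\tilde{S}^{-1} P_D^H \tilde{S} P_D \tilde{w}, \tilde{w})}{\tilde{a}(\tilde{w}, \tilde{w})},
\end{align*}
and to reduce the numerator $\tilde{a}(\tilde{S}^{-1} P_D^H \tilde{S} P_D \tilde{w}, \tilde{w})$ to $\tilde{a}(P_D \tilde{w}, P_D \tilde{w})$. The natural device is the Cauchy–Schwarz inequality in the inner product induced by the Hermitian positive definite operator $\tilde{S}$ on $\tilde{W}$. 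Writing $\langle \cdot,\cdot\rangle_{\tilde{S}} := \tilde{a}(\cdot,\cdot) = (\tilde{S}\cdot,\cdot)$, I would first observe that $\tilde{a}(\tilde{S}^{-1}P_D^H\tilde{S}P_D\tilde{w},\tilde{w}) = (P_D^H\tilde{S}P_D\tilde{w},\tilde{w}) = (\tilde{S}P_D\tilde{w},P_D\tilde{w})=\tilde{a}(P_D\tilde{w},P_D\tilde{w})$ \emph{if} the adjoint $P_D^H$ is taken with respect to the standard pairing $(\cdot,\cdot)$; so the main point is simply to make the identifications of adjoints consistent. If instead one wishes to keep everything in the $\tilde{a}$-pairing, set $Q := \tilde{S}^{-1}P_D^H\tilde{S}$ and note $\tilde{a}(QP_D\tilde{w},\tilde{w}) = \tilde{a}(P_D\tilde{w},P_D\tilde{w})$ by the very definition of the $\tilde{a}$-adjoint $Q = (P_D)^{\ast_{\tilde{a}}}$; this is an exact identity, not an inequality, and then taking the maximum over $\tilde{w}\in\tilde{W}\setminus\{0\}$ yields \eqref{chap2-5-26-proof-2-000} immediately.

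Concretely, the steps I would carry out are: (i) recall that $\tilde{S}:\tilde{W}\to\tilde{W}$ is Hermitian positive definite (stated after \eqref{chap2-def-operator-tilde-A}), so $\tilde{a}(\cdot,\cdot)$ is a genuine inner product on $\tilde{W}$; (ii) verify that $\tilde{S}^{-1}P_D^H\tilde{S}$ is exactly the adjoint of $P_D$ with respect to $\tilde{a}(\cdot,\cdot)$, i.e. $\tilde{a}(\tilde{S}^{-1}P_D^H\tilde{S}\,u, v) = (P_D^H\tilde{S}u,v) = (\tilde{S}u,P_Dv) = \tilde{a}(u,P_Dv)$ for all $u,v\in\tilde{W}$; (iii) apply this with $u = P_D\tilde{w}$, $v=\tilde{w}$ to get $\tilde{a}(\tilde{S}^{-1}P_D^H\tilde{S}P_D\tilde{w},\tilde{w}) = \tilde{a}(P_D\tilde{w},P_D\tilde{w})$; (iv) substitute into the displayed bound preceding the lemma and pass to the supremum. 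One could alternatively route through Cauchy–Schwarz: $\tilde{a}(\tilde{S}^{-1}P_D^H\tilde{S}P_D\tilde{w},\tilde{w})$ is, up to the adjoint identification, $\langle \tilde{S}P_D\tilde{w},P_D\tilde{w}\rangle^{1/2}\langle\tilde{S}P_D\tilde{w},P_D\tilde{w}\rangle^{1/2}$ after a symmetrisation, but the direct adjoint computation is cleaner and is the route I would take.

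I do not expect a genuine obstacle here; this lemma is essentially a bookkeeping identity packaging the standard reduction "$\lambda_{\max}(M_{BDDC}^{-1}\hat{S}) \le \|P_D\|_{\tilde{S}}^2$''. The only subtlety worth stating carefully is the meaning of $(\cdot)^H$ when it is applied to an operator $P_D$ acting between (complex) function spaces: one must fix whether it denotes the adjoint relative to the Euclidean pairing of coordinate vectors or relative to $\tilde{a}(\cdot,\cdot)$, and keep that choice consistent with how $G_d = (P_D)^H\tilde{S}P_D\tilde{S}^{-1}$ was written in \eqref{chap2-transform-1}. Once that is pinned down, the chain $G_d \sim \tilde{S}^{-1}P_D^H\tilde{S}P_D \Rightarrow$ Rayleigh quotient $\Rightarrow$ \eqref{chap2-5-26-proof-2-000} is immediate, and the real work — bounding $\tilde{a}(P_D\tilde{w},P_D\tilde{w})$ in terms of $\tilde{a}(\tilde{w},\tilde{w})$ using Lemma~\ref{chap2-lemma-contral-iequ} and the generalized eigenvalue estimate \eqref{chap2-ZJ-Tphik-TDphik-Prop} — is deferred to the subsequent lemmas, not to this one.
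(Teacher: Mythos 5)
Your proposal is correct and follows essentially the same route as the paper: the paper also starts from the Rayleigh-quotient bound $\lambda_{\max}(\hat{G}) \le \max_{\tilde{w}} \tilde{a}(\tilde{S}^{-1}P_D^H\tilde{S}P_D\tilde{w},\tilde{w})/\tilde{a}(\tilde{w},\tilde{w})$ established just before the lemma and then collapses the numerator to $\tilde{a}(P_D\tilde{w},P_D\tilde{w})$ via the definition $(\tilde{S}\cdot,\cdot)=\tilde{a}(\cdot,\cdot)$, i.e.\ exactly the adjoint identity you spell out in steps (ii)--(iii). Your explicit remark about fixing the meaning of $(\cdot)^H$ consistently with how $G_d$ is written is a useful clarification of a point the paper leaves implicit, but it does not change the argument.
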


In order to estimate the right hand of \eqref{chap2-5-26-proof-2-000}, we derive the expression of $P_D \tilde{w}$ for any $\tilde{w} \in \tilde{W}$ firstly.

\begin{lemma}\label{lemma-PD-express}
For any $\tilde{w} \in \tilde{W}$ defined in \eqref{chap2-def-PD-tildew}, we have
\begin{align}\label{chap2-exp-Pdwi}
P_D \tilde{W} = \sum\limits_{r=1}^{N_d} \sum\limits_{k \in \mathcal{M}_F^{(r)}}\sum\limits_{s \in \mathcal{N}_{\mathcal{F}_k} \backslash \{r\}}
(w_{D,\Delta}^{F_k,r,s} - \tilde{w}_{D,\Delta}^{F_k,r,s}) + \sum\limits_{r=1}^{N_d} \sum\limits_{k \in \mathcal{M}_E^{(r)}} \sum\limits_{s \in \mathcal{N}_{\mathcal{E}_k} \backslash \{r\}}(w_{D,\Delta}^{E_k,r,s} - \tilde{w}_{D,\Delta}^{E_k,r,s}),
\end{align}
where
\begin{align}\label{chap2-exp-Pdwi-parameter}
w_{D,\Delta}^{X_k,r,s}:= (\vec{w}_{X_k,\Delta}^{(r)})^T \Phi_{D,\Delta}^{X_k,r,s},~
\tilde{w}_{D,\Delta}^{X_k,r,s} := (\vec{w}_{X_k,\Delta}^{(s)})^T \Phi_{D,\Delta}^{X_k,r,s},~X = F, E,
\end{align}
here $\Phi_{D,\Delta}^{X_k,r,s} (X = F, E)$ are given by \eqref{chap2-ZJ-TDphik-ij-Def-00}.

\end{lemma}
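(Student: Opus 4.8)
The plan is to compute $P_D\tilde{w} = \tilde{w} - E_D\tilde{w} = \tilde{w} - \tilde{R}_{\Gamma}I_{\Gamma}\tilde{D}\tilde{w}$ directly, using the decomposition \eqref{chap2-def-PD-tildew} of $\tilde{w}$ and the action of $I_{\Gamma}$, $\tilde{D}$, $\tilde{R}_{\Gamma}$ on each of the spaces $W^{(r)}_{X_k,\Delta}$, $W_{X_k,\Pi}$ and $W_{V_k}$. First I would apply $\tilde{D}$ to $\tilde{w}$: by \eqref{def-tildeD}, $\tilde{D}$ acts as the identity on the primal part $w_{\Pi}$ (through $\tilde{R}_{\Pi}$) and scales each dual component $w^{(r)}_{X_k,\Delta}\in W^{(r)}_{X_k,\Delta}$ by $D^{(r)}_{\Delta}$, which by \eqref{chap2-def-R-property-1} coincides with $D^{(r)}_{X_k}$ on that space. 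Using \eqref{chap2-ZJ-Dphik-ij-Def} this replaces the basis vector $\Phi^{X_k,r}_{\Delta}$ by $\Phi^{X_k,r,r}_{D,\Delta}$, i.e. the coefficient vector $\vec{w}^{(r)}_{X_k,\Delta}$ is now paired with the scaled-and-truncated basis in $D_r$.

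Next I would apply $I_{\Gamma}$: by \eqref{chap2-IGamma-1}--\eqref{chap2-IGamma-3} this glues the subdomain dual pieces across the equivalence class $\mathcal{X}_k$ (identifying $\phi^{X_k,\nu}_{\Delta,l}$ with the global $\phi^{X_k}_{\Delta,l}$) and leaves the primal part untouched. The crucial bookkeeping step is then to apply $\tilde{R}_{\Gamma}$, the natural injection back into $\tilde{W}$, which re-truncates the assembled global function to each subdomain. Composing these, the primal part $w_{\Pi}$ is reproduced exactly, so it cancels against the $w_{\Pi}$ in $\tilde{w}$; what remains from $\tilde{w} - E_D\tilde{w}$ is, for each $r$ and each $k\in\mathcal{M}_X^{(r)}$, the difference between the original local component $w^{(r)}_{X_k,\Delta}=(\vec{w}^{(r)}_{X_k,\Delta})^T\Phi^{X_k,r}_{\Delta}$ and the sum over $\nu\in\mathcal{N}_{\mathcal{X}_k}$ of the scaled contributions truncated back to $D_r$. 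Writing out the truncation-to-$D_r$ of the assembled function and using \eqref{chap2-rel-phichik-phichikv} (which says $\phi^{X_k}_{\zeta,l}|_{\bar D_\nu}=\phi^{X_k,\nu}_{\zeta,l}|_{\bar D_\nu}$) to express everything through the subdomain basis $\Phi^{X_k,\nu}_{\Delta}$, the $\nu=r$ term of the sum is exactly $w^{(r)}_{D,\Delta}=(\vec{w}^{(r)}_{X_k,\Delta})^T\Phi^{X_k,r,r}_{D,\Delta}$, so by the partition-of-unity property \eqref{chap2-def-D-trans} one rewrites $w^{(r)}_{X_k,\Delta}-w^{(r)}_{D,\Delta}=\sum_{s\neq r}(\ldots)$, and matching the scaling indices against the definitions \eqref{chap2-exp-Pdwi-parameter} of $w^{X_k,r,s}_{D,\Delta}$ and $\tilde{w}^{X_k,r,s}_{D,\Delta}$ yields \eqref{chap2-exp-Pdwi}.

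The main obstacle is purely notational rather than conceptual: one must be scrupulous about the two different "scaling-then-truncating" operations — scaling by $D^{(s)}_{X_k}$ and then restricting to $D_r$ versus to $D_s$ — since $\Phi^{X_k,r,s}_{D,\Delta}$ and $\Phi^{X_k,s,r}_{D,\Delta}$ differ precisely in which subdomain the truncation lands in, and these must line up correctly with the summation index appearing in $E_D\tilde{w}$. Keeping the indices $(k,r,s)$ straight through the chain $\tilde{D}\to I_{\Gamma}\to\tilde{R}_{\Gamma}$, and invoking \eqref{chap2-def-D-trans} at the right moment to turn a "missing" $\nu=r$ term into a sum over $s\neq r$, is the delicate part; once that is done the identity \eqref{chap2-exp-Pdwi} falls out. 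This is the three-dimensional analogue of the corresponding two-dimensional computation in \cite{PengWang18:683}, with the common-edge classes $\mathcal{E}_k$ handled by exactly the same manipulations as the common-face classes $\mathcal{F}_k$.
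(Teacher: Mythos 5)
Your proposal is correct and follows essentially the same route as the paper's proof: expand $P_D\tilde{w}=\tilde{w}-\tilde{R}_{\Gamma}I_{\Gamma}\tilde{D}\tilde{w}$, cancel the primal part, and use the partition of unity \eqref{chap2-def-D-trans} together with careful $(k,r,s)$ index bookkeeping (the paper organizes this via the transfer operators $T_{W_{X_k}^{(r)}}^{W_{X_k}^{(s)}}$ and a final swap of $r$ and $\mu$) to cancel the diagonal $s=r$ terms and arrive at \eqref{chap2-exp-Pdwi}. The point you flag as delicate — distinguishing which subdomain the truncation lands in versus which index carries the scaling in $\Phi_{D,\Delta}^{X_k,r,s}$ — is indeed exactly where the paper's computation spends its effort.
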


\begin{proof}

By using \eqref{chap2-def-Pd-0}, $E_D = \tilde{R}_{\Gamma} I_{\Gamma} \tilde{D}$ and \eqref{def-tildeD}, we can rewrite $P_D \tilde{w}$ as
\begin{align}\label{chap2-lemmaLT-I-equ-proof-1-001-0}\nonumber
P_D \tilde{w} &= \tilde{w} - E_D \tilde{w}\\\nonumber
&=\tilde{w} - \tilde{R}_{\Gamma} I_{\Gamma} \tilde{D} \tilde{w}\\\nonumber
&= \sum\limits_{r=1}^{N_d} \sum\limits_{k \in \mathcal{M}_F^{(r)}} w_{F_k,\Delta}^{(r)}
+ \sum\limits_{r=1}^{N_d} \sum\limits_{k \in \mathcal{M}_E^{(r)}} w_{E_k,\Delta}^{(r)} + w_{\Pi} \\\nonumber
&~~- \tilde{R}_{\Gamma} I_{\Gamma} \sum\limits_{r=1}^{N_d} D_{\Delta}^{(r)} (\sum\limits_{k \in \mathcal{M}_F^{(r)}} w_{F_k,\Delta}^{(r)}
+ \sum\limits_{k \in \mathcal{M}_E^{(r)}} w_{E_k,\Delta}^{(r)}) -w_{\Pi} \\\nonumber
&= \sum\limits_{r=1}^{N_d} \sum\limits_{k \in \mathcal{M}_F^{(r)}} w_{F_k,\Delta}^{(r)}
+ \sum\limits_{r=1}^{N_d} \sum\limits_{k \in \mathcal{M}_E^{(r)}} w_{E_k,\Delta}^{(r)} \\
&~~- \tilde{R}_{\Gamma} I_{\Gamma} \sum\limits_{r=1}^{N_d} D_{\Delta}^{(r)} (\sum\limits_{k \in \mathcal{M}_F^{(r)}} w_{F_k,\Delta}^{(r)}
+ \sum\limits_{k \in \mathcal{M}_E^{(r)}} w_{E_k,\Delta}^{(r)}).
\end{align}

From \eqref{chap2-def-PD-tildew}, \eqref{chap2-def-R-property-1} and the definition of $\tilde{R}_{\Gamma}$,
 we can see that the third term of the right hand of \eqref{chap2-lemmaLT-I-equ-proof-1-001-0} satisfies
\allowdisplaybreaks
\begin{align}\label{chap2-equ-IR-1} \nonumber
&\tilde{R}_{\Gamma} I_{\Gamma} \sum\limits_{r=1}^{N_d} D_{\Delta}^{(r)} (\sum\limits_{k
\in \mathcal{M}_F^{(r)}} w_{F_k,\Delta}^{(r)} + \sum\limits_{k
\in \mathcal{M}_E^{(r)}} w_{E_k,\Delta}^{(r)})\\\nonumber
&= \tilde{R}_{\Gamma} (\sum\limits_{r=1}^{N_d} \sum\limits_{k \in \mathcal{M}_F^{(r)}}
T_r D_{F_k}^{(r)}
w_{F_k,\Delta}^{(r)} + \sum\limits_{r=1}^{N_d} \sum\limits_{k \in \mathcal{M}_E^{(r)}}
T_r D_{E_k}^{(r)}w_{E_k,\Delta}^{(r)})\\
&= \sum\limits_{r=1}^{N_d} \sum\limits_{k \in \mathcal{M}_F^{(r)}}
D_{F_k}^{(r)} \sum\limits_{s \in \mathcal{N}_{\mathcal{F}_k}} T_{W_{F_k}^{(r)}}^{W_{F_k}^{(s)}}
w_{F_k,\Delta}^{(r)} + \sum\limits_{r=1}^{N_d} \sum\limits_{k \in \mathcal{M}_F^{(r)}}
D_{E_k}^{(r)} \sum \limits_{s \in \mathcal{N}_{\mathcal{E}_k}} T_{W_{E_k}^{(r)}}^{W_{E_k}^{(s)}}
w_{E_k,\Delta}^{(r)},
\end{align}
where $T_{W_{X_k}^{(r)}}^{W_{X_k}^{(s)}} (r,s \in \mathcal{N}_{\mathcal{X}_k}, X = F,E)$ are the basis transformation operators from $W_{X_k}^{(r)}$ to $W_{X_k}^{(s)}$ which satisfy
\begin{align}\label{chap2-ZJ-Basis-WkiToWk-Tran-ij}
T_{W_{X_k}^{(r)}}^{W_{X_k}^{(s)}} \phi_l^{X_k,r} = \phi_l^{X_k,s},~l=1,\cdots,n_{X_k},~r,s \in \mathcal{N}_{\mathcal{X}_k}, X = F,E.
\end{align}

Combining \eqref{chap2-lemmaLT-I-equ-proof-1-001-0} with \eqref{chap2-equ-IR-1}, and using
~\eqref{chap2-def-D-trans}, \eqref{chap2-def-PD-tildew-para}, \eqref{chap2-ZJ-Basis-WkiToWk-Tran-ij},
\eqref{chap2-Zj-Def-Vec-TildPhiDel-1} and \eqref{chap2-ZJ-Dphik-ij-Def}, we obtain
\allowdisplaybreaks
\begin{align*}\nonumber
P_D \tilde{w}
&= \sum\limits_{r=1}^{N_d} \sum\limits_{k \in
\mathcal{M}_F^{(r)}} w_{F_k,\Delta}^{(r)}
+ \sum\limits_{r=1}^{N_d} \sum\limits_{k \in
\mathcal{M}_E^{(r)}} w_{E_k,\Delta}^{(r)}
 - \sum\limits_{r=1}^{N_d} \sum\limits_{k \in \mathcal{M}_F^{(r)}}
D_{F_k}^{(r)} \sum\limits_{\mu \in \mathcal{N}_{\mathcal{F}_k}} T_{W_{F_k}^{(r)}}^{W_{F_k}^{(\mu)}}
w_{F_k,\Delta}^{(r)} \\\nonumber
&~~-  \sum\limits_{r=1}^{N_d} \sum\limits_{k \in \mathcal{M}_E^{(r)}}
D_{E_k}^{(r)} \sum \limits_{\mu \in \mathcal{N}_{\mathcal{E}_k}} T_{W_{E_k}^{(r)}}^{W_{E_k}^{(\mu)}}
w_{E_k,\Delta}^{(r)}\\\nonumber
&= \sum\limits_{r=1}^{N_d} \sum\limits_{k \in \mathcal{M}_F^{(r)}} \sum\limits_{\mu \in \mathcal{N}_{\mathcal{F}_k}}
D_{F_k}^{(\mu)} w_{F_k,\Delta}^{(r)}
   - \sum\limits_{r=1}^{N_d} \sum\limits_{k \in \mathcal{M}_F^{(r)}} D_{F_k}^{(r)} \sum\limits_{\mu \in \mathcal{N}_{\mathcal{F}_k}} T_{W_{F_k}^{(r)}}^{W_{F_k}^{(\mu)}} w_{F_k,\Delta}^{(r)}\\\nonumber
&~~+\sum\limits_{r=1}^{N_d} \sum\limits_{k \in \mathcal{M}_E^{(r)}} \sum\limits_{\mu \in \mathcal{N}_{\mathcal{E}_k}} D_{E_k}^{(\mu)} w_{E_k,\Delta}^{(r)}
 - \sum\limits_{r=1}^{N_d} \sum\limits_{k \in \mathcal{M}_E^{(r)}} D_{E_k}^{(r)} \sum\limits_{\mu \in \mathcal{N}_{\mathcal{E}_k}}T_{W_{E_k}^{(r)}}^{W_{E_k}^{(\mu)}} w_{E_k,\Delta}^{(r)}\\\nonumber
&= \sum\limits_{k=1}^{N_F} \sum\limits_{r \in \mathcal{N}_{\mathcal{F}_k}} \sum\limits_{\mu \in \mathcal{N}_{\mathcal{F}_k} \backslash \{r\}} \left(D_{F_k}^{(\mu)}
w_{F_k,\Delta}^{(r)} - D_{F_k}^{(r)} T_{W_{F_k}^{(r)}}^{W_{F_k}^{(\mu)}}
w_{F_k,\Delta}^{(r)}\right)
\\\nonumber
&~~+\sum\limits_{k=1}^{N_E} \sum\limits_{r \in \mathcal{N}_{\mathcal{E}_k}}
\sum\limits_{\mu \in \mathcal{N}_{\mathcal{E}_k} \backslash \{r\} } (D_{E_k}^{(\mu)}w_{E_k,\Delta}^{(r)} - D_{E_k}^{(r)} T_{W_{E_k}^{(r)}}^{W_{E_k}^{(\mu)}} w_{E_k,\Delta}^{(r)})\\\nonumber
&= \sum\limits_{k=1}^{N_F} \sum\limits_{r \in \mathcal{N}_{\mathcal{F}_k}} \sum\limits_{\mu \in \mathcal{N}_{\mathcal{F}_k} \backslash \{r\}} \left(D_{F_k}^{(\mu)}
w_{F_k,\Delta}^{(r)} - D_{F_k}^{(\mu)} T_{W_{F_k}^{(\mu)}}^{W_{F_k}^{(r)}}
w_{F_k,\Delta}^{(\mu)}\right) \\\nonumber
&~~+ \sum\limits_{k=1}^{N_E} \sum\limits_{r \in \mathcal{N}_{\mathcal{E}_k}}
\sum\limits_{\mu \in \mathcal{N}_{\mathcal{E}_k} \backslash \{r\} } \left(D_{E_k}^{(\mu)}w_{E_k,\Delta}^{(r)} - D_{E_k}^{(\mu)} T_{W_{E_k}^{(\mu)}}^{W_{E_k}^{(r)}} w_{E_k,\Delta}^{(\mu)}\right)
\\
&= \sum\limits_{r=1}^{N_d} \sum\limits_{k \in \mathcal{M}_F^{(r)}}\sum\limits_{s \in \mathcal{N}_{\mathcal{F}_k} \backslash \{r\}}
\left(w_{D,\Delta}^{F_k,r,s} - \tilde{w}_{D,\Delta}^{F_k,r,s}\right) + \sum\limits_{r=1}^{N_d} \sum\limits_{k \in \mathcal{M}_E^{(r)}} \sum\limits_{s \in \mathcal{N}_{\mathcal{E}_k} \backslash \{r\}}\left(w_{D,\Delta}^{E_k,r,s} - \tilde{w}_{D,\Delta}^{E_k,r,s} \right).
\end{align*}

The proof of \eqref{chap2-exp-Pdwi} has been completed.
\end{proof}

Combing Lemma \ref{chap2-lemma-5-2} and Lemma \ref{lemma-PD-express}, the following lemma holds.
\begin{lemma}\label{chap2-lemma-max}
For any given thresholds $\Theta_E, \Theta_F \ge 1$, the maximum eigenvalue of the adaptive BDDC preconditioned operator $\hat{G}$ satisfies
\begin{align}\label{chap2-eig-max}
\lambda_{\max}(\hat{G}) \le C \Theta,
\end{align}
where $\Theta = \max\{\Theta_E, \Theta_F\}$, $C = 2 C_{FE}^2$, here $C_{FE}$ is a constant depending only on the number of common faces and edges per subdomain and the number of subdomains sharing an edge.
\end{lemma}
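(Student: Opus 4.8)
Here is how I would attack Lemma~\ref{chap2-lemma-max}.

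The plan is to chain together the four facts already at hand: Lemma~\ref{chap2-lemma-5-2}, which bounds $\lambda_{\max}(\hat{G})$ by the Rayleigh quotient $\tilde{a}(P_D\tilde{w}, P_D\tilde{w})/\tilde{a}(\tilde{w},\tilde{w})$; Lemma~\ref{lemma-PD-express}, which writes $P_D\tilde{w}$ explicitly as a sum of face/edge ``jump'' pieces $w_{D,\Delta}^{X_k,r,s}-\tilde{w}_{D,\Delta}^{X_k,r,s}$; the estimate~\eqref{chap2-ZJ-Tphik-TDphik-Prop} coming from the adaptive generalized eigenvalue problems; and Lemma~\ref{chap2-lemma-contral-iequ}. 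First I would use~\eqref{chap2-def-operator-tilde-A} to write $\tilde{a}(P_D\tilde{w}, P_D\tilde{w})=\sum_{r=1}^{N_d}|(P_D\tilde{w})^{(r)}|_{a_r}^2$ and substitute~\eqref{chap2-exp-Pdwi}. Since $w_{D,\Delta}^{X_k,r,s}$ and $\tilde{w}_{D,\Delta}^{X_k,r,s}$ are built from the truncated basis $\Phi_{X_k}^{(r)}$, both are supported in $\bar{D}_r$; hence, by the support properties of these functions, $(P_D\tilde{w})^{(r)}$ is the sum of those jump pieces carrying the first index $r$, i.e.\ over $X\in\{F,E\}$, $k\in\mathcal{M}_X^{(r)}$ and $s\in\mathcal{N}_{\mathcal{X}_k}\setminus\{r\}$.

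The number of these pieces is bounded by a quantity that depends only on $|\mathcal{M}_F^{(r)}|$, $|\mathcal{M}_E^{(r)}|$ and the number of subdomains sharing a common edge; denoting a uniform bound by $C_{FE}$, the triangle inequality for $|\cdot|_{a_r}$ together with Cauchy--Schwarz and $|a-b|^2\le 2(|a|^2+|b|^2)$ gives
\begin{align*}
|(P_D\tilde{w})^{(r)}|_{a_r}^2 \le 2C_{FE}\sum_{X\in\{F,E\}}\;\sum_{k\in\mathcal{M}_X^{(r)}}\;\sum_{s\in\mathcal{N}_{\mathcal{X}_k}\setminus\{r\}}\big(|w_{D,\Delta}^{X_k,r,s}|_{a_r}^2+|\tilde{w}_{D,\Delta}^{X_k,r,s}|_{a_r}^2\big).
\end{align*}
Next I would sum over $r$ and regroup the triple sum by ``source'' subdomain and glob. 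Using that the index set $\{(r,k,s):r,s\in\mathcal{N}_{\mathcal{X}_k},\,r\ne s\}$ is invariant under the swap $(r,k,s)\mapsto(s,k,r)$, one has $\sum|\tilde{w}_{D,\Delta}^{X_k,r,s}|_{a_r}^2=\sum|\tilde{w}_{D,\Delta}^{X_k,s,r}|_{a_s}^2$, and the group of terms with a fixed source subdomain $r$ and a fixed glob $X_k$ is then exactly the left-hand side of~\eqref{chap2-ZJ-Tphik-TDphik-Prop} with $\vec{w}_{X_k,\Delta}=\vec{w}_{X_k,\Delta}^{(r)}$ and $\vec{w}_{X_k,\Pi}$ the primal component of $\tilde{w}$ on $X_k$; hence it is bounded by $\Theta_X|\bar{w}_{X_k,\Delta}^{(r)}+\bar{w}_{X_k,\Pi}^{(r)}|_{a_r}^2=\Theta_X|\bar{w}_{X_k}^{(r)}|_{a_r}^2$, with $\bar{w}_{X_k}^{(r)}$ as in Lemma~\ref{chap2-lemma-contral-iequ}.

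It then remains to sum the glob contributions. Choosing the ``free'' components in Lemma~\ref{chap2-lemma-contral-iequ} so that $w^{(r)}=\tilde{w}^{(r)}$ --- which is legitimate because, by~\eqref{chap2-def-operator-tilde-S}, $\tilde{w}^{(r)}$ is assembled from exactly the components $w_{X_k,\Delta}^{(r)}+T^r w_{X_k,\Pi}$ and $T^r w_{V_k}$ --- estimate~\eqref{chap2-4-1-2-tildew} gives $\sum_{k\in\mathcal{M}_X^{(r)}}|\bar{w}_{X_k}^{(r)}|_{a_r}^2\le|\mathcal{M}_X^{(r)}|\,|\tilde{w}^{(r)}|_{a_r}^2$. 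Putting the $X=F$ and $X=E$ contributions together, absorbing $\max_r|\mathcal{M}_F^{(r)}|$ and $\max_r|\mathcal{M}_E^{(r)}|$ into $C_{FE}$, setting $\Theta=\max\{\Theta_E,\Theta_F\}$, and using $\sum_r|\tilde{w}^{(r)}|_{a_r}^2=\tilde{a}(\tilde{w},\tilde{w})$ from~\eqref{chap2-def-operator-tilde-A}, I arrive at $\tilde{a}(P_D\tilde{w}, P_D\tilde{w})\le 2C_{FE}^2\,\Theta\,\tilde{a}(\tilde{w},\tilde{w})$; together with Lemma~\ref{chap2-lemma-5-2} this yields~\eqref{chap2-eig-max}.

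The hard part will be the index bookkeeping in the middle step: one must check carefully that, after restricting to $D_r$ and regrouping the multiple sums, the jump pieces line up --- both the underlying functions $\Phi_{D,\Delta}^{X_k,\cdot,\cdot}$ with the correct support and the coefficient vectors $\vec{w}_{X_k,\Delta}^{(r)}$ versus $\vec{w}_{X_k,\Delta}^{(s)}$ --- with the quantities $w_{D,\Delta}^{X_k,r,s}$ and $\tilde{w}_{D,\Delta}^{X_k,s,r}$ that occur on the left of~\eqref{chap2-ZJ-Tphik-TDphik-Prop}, and that the swap symmetry of the index set is invoked with the correct orientation, so that the $a_s$-norm term reappears with the source coefficients of $D_r$. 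A lesser, but still necessary, point is to make the dependence of $C_{FE}$ on the maximal number of faces and edges per subdomain and on the maximal number of subdomains sharing an edge explicit; this is a counting argument once the support structure of the jump pieces is understood.
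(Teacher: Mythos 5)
Your proposal is correct and follows essentially the same route as the paper's own proof: bound $\lambda_{\max}(\hat G)$ by the Rayleigh quotient of $P_D$ via Lemma \ref{chap2-lemma-5-2}, localize $(P_D\tilde w)^{(r)}$ using the expression from Lemma \ref{lemma-PD-express}, apply $|\sum_l\alpha_l|^2_{a_r}\le J\sum_l|\alpha_l|^2_{a_r}$ to extract $2C_{FE}$, use the swap symmetry $(r,k,s)\mapsto(s,k,r)$ so that the generalized-eigenvalue estimate \eqref{chap2-ZJ-Tphik-TDphik-Prop} applies with coefficients $\vec w_{X_k,\Delta}^{(r)}$, and finish with Lemma \ref{chap2-lemma-contral-iequ} choosing the free components so that $w^{(r)}=\tilde w^{(r)}$. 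The bookkeeping points you flag (matching $\tilde w_{D,\Delta}^{X_k,s,r}$ in the $a_s$-norm with the source coefficients of $D_r$, and absorbing the counting constants into $C_{FE}$) are exactly the steps the paper carries out in \eqref{chap2-final-ineq-1}.
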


\begin{proof}
In fact, if we can prove
\begin{align*}
\max\limits_{\tilde{w} \in  \tilde{W} \backslash \{0\}} \frac{\tilde{a}(P_D \tilde{w}, P_D \tilde{w})}{\tilde{a}(\tilde{w}, \tilde{w})} \le C \Theta.
\end{align*}
then \eqref{chap2-eig-max} holds.

Using
~\eqref{chap2-def-operator-tilde-A},
~\eqref{chap2-def-PD-tildew}, \eqref{chap2-exp-Pdwi} and \eqref{def-normAi}
, we can see the above inequality is equivalent to
\begin{align}\label{chap2-Pb-max-eigenvalue-estimate-equiv-1-000-a}
\sum\limits_{r=1}^{N_d} |(P_D \tilde{w})^{(r)}|_{a_r}^2 \le C
\Theta \sum\limits_{r=1}^{N_d} |\tilde{w}^{(r)}|_{a_r}^2,~\forall \tilde{w} \in \tilde{W} \backslash \{0\},
\end{align}
where
\begin{align}\label{chap2-PDi}
&\tilde{w}^{(r)} = \sum\limits_{k \in \mathcal{M}_F^{(r)}} (w_{F_k,\Delta}^{(r)} + w_{F_k,\Pi}^{(r)})
+ \sum\limits_{k \in \mathcal{M}_E^{(r)}} (w_{E_k,\Delta}^{(r)} + w_{E_k,\Pi}^{(r)})
+ \sum\limits_{k\in \mathcal{M}_V^{(r)}} w_{V_k}^{(r)},\\\label{chap2-PDi}
&(P_D \tilde{w})^{(r)} =  \sum\limits_{k \in \mathcal{M}_F^{(r)}} \sum\limits_{s \in \mathcal{N}_{\mathcal{F}_k}\backslash \{r\}} (w_{D,\Delta}^{F_k,r,s} - \tilde{w}_{D,\Delta}^{F_k,r,s}) + \sum\limits_{k \in \mathcal{M}_E^{(r)}} \sum\limits_{s \in \mathcal{N}_{\mathcal{E}_k}\backslash \{r\}} (w_{D,\Delta}^{E_k,r,s} - \tilde{w}_{D,\Delta}^{E_k,r,s}),
\end{align}
here $w_{X_k,\Pi}^{(r)} = (\vec{w}_{X_k,\Pi})^T \Phi_{\Pi}^{X_k,r} \in W_{X_k,\Pi}^{(r)} (X = F, E)$, $w_{V_k}^{(r)} = (\vec{w}_{V_k})^T \Phi^{V_k,r} \in W_{V_k}^{(r)}$, and $w_{D,\Delta}^{X_k,r,s}, \tilde{w}_{D,\Delta}^{X_k,r,s}$ $(X = F, E)$ are defined in \eqref{chap2-exp-Pdwi-parameter}.

By using \eqref{chap2-PDi}, \eqref{chap2-ZJ-Tphik-TDphik-Prop}, \eqref{chap2-4-1-2-tildew}
and the inequality $|\sum\limits_{l=1}^J \balpha_l|_{a_r}^2 \le J \sum\limits_{l=1}^J |\balpha_l|_{a_r}^2$, we obtain
\begin{align}\label{chap2-final-ineq-1}\nonumber
\sum\limits_{r=1}^{N_d} \big{|}(P_D\tilde{w})^{(r)}|_{a_r}^2
&\le 2 C_{FE}
\sum\limits_{r=1}^{N_d}\left(\sum\limits_{k \in
\mathcal{M}_F^{(r)}} \sum\limits_{s \in \mathcal{N}_{\mathcal{F}_k} \backslash \{r\}} \left(|w_{D,\Delta}^{F_k,r,s}|_{a_r}^2 +
|\tilde{w}_{D,\Delta}^{F_k,r,s}|_{a_r}^2\right) \right.\\\nonumber
&~~~\left.+ \sum\limits_{k \in
\mathcal{M}_E^{(r)}} \sum\limits_{s \in \mathcal{N}_{\mathcal{E}_k} \backslash \{r\}}\left(|w_{D,\Delta}^{E_k,r,s}|_{a_r}^2 +
|\tilde{w}_{D,\Delta}^{E_k,r,s}|_{a_r}^2\right)\right)
\\\nonumber
&= 2 C_{FE} \left([\sum\limits_{r=1}^{N_d} \sum\limits_{k \in
\mathcal{M}_F^{(r)}} \sum\limits_{s \in \mathcal{N}_{\mathcal{F}_k} \backslash \{r\}} |w_{D,\Delta}^{F_k,r,s}|_{a_r}^2 +
\sum\limits_{r=1}^{N_d} \sum\limits_{k \in \mathcal{M}_F^{(r)}} \sum\limits_{s \in \mathcal{N}_{\mathcal{F}_k} \backslash \{r\}}
|\tilde{w}_{D,\Delta}^{F_k,s,r}|_{a_s}^2] \right. \\\nonumber
&~~~~\left. +
[\sum\limits_{r=1}^{N_d} \sum\limits_{k \in
\mathcal{M}_E^{(r)}} \sum\limits_{s \in \mathcal{N}_{\mathcal{E}_k} \backslash \{r\}} |w_{D,\Delta}^{E_k,r,s}|_{a_r}^2 +
\sum\limits_{r=1}^{N_d} \sum\limits_{k \in \mathcal{M}_E^{(r)}} \sum\limits_{s \in \mathcal{N}_{\mathcal{E}_k} \backslash \{r\}}
|\tilde{w}_{D,\Delta}^{E_k,s,r}|_{a_s}^2]\right)\\\nonumber
&= 2 C_{FE} \left(\sum\limits_{r=1}^{N_d} \sum\limits_{k \in \mathcal{M}_F^{(r)}} \sum\limits_{s \in \mathcal{N}_{\mathcal{F}_k} \backslash \{r\}}
\left(|w_{D,\Delta}^{F_k,r,s}|_{a_r}^2 +
|\tilde{w}_{D,\Delta}^{F_k,s,r}|_{a_s}^2\right)\right.\\ \nonumber
&\left.~~+ \sum\limits_{r=1}^{N_d} \sum\limits_{k \in
\mathcal{M}_E^{(r)}} \sum\limits_{s \in \mathcal{N}_{\mathcal{E}_k} \backslash \{r\}} \left(|w_{D,\Delta}^{E_k,r,s}|_{a_r}^2 +
|\tilde{w}_{D,\Delta}^{E_k,s,r}|_{a_s}^2\right)\right)\\\nonumber
&\lesssim 2 C_{FE} \left(\Theta_F \sum\limits_{r=1}^{N_d} \sum\limits_{k \in
\mathcal{M}_F^{(r)}} |\bar{w}_{F_k,\Delta}^{(r)} +
\bar{w}_{F_k,\Pi}^{(r)}|_{a_r}^2 \right.
+ \left. \Theta_E \sum\limits_{r=1}^{N_d} \sum\limits_{k \in
\mathcal{M}_E^{(r)}} |\bar{w}_{E_k,\Delta}^{(r)} +
\bar{w}_{E_k,\Pi}^{(r)}|_{a_r}^2 \right)
\\
&\le 2 C_{FE}^2 \Theta \sum\limits_{r=1}^{N_d}
|\tilde{w}^{(r)}|_{a_r}^2,
\end{align}
where
\begin{align*}
\tilde{w}_{D,\Delta}^{X_k,s,r} := (\vec{w}_{X_k,\Delta}^{(r)})^T \Phi_{D,\Delta}^{X_k,s,r},~
\bar{w}_{X_k,\zeta}^{(r)} = (\vec{w}_{X_k,\zeta}^{(r)})^T \bar{\Phi}_{\zeta}^{X_k,r} \in \bar{W}_{X_k,\zeta}^{(r)}, ~\zeta = \Delta,\Pi,~X = F, E.
\end{align*}

Finally, \eqref{chap2-Pb-max-eigenvalue-estimate-equiv-1-000-a} follows from  \eqref{chap2-final-ineq-1}.
\end{proof}

By Lemma \ref{chap2-theorem-5-1} and Lemma \ref{chap2-lemma-max}, the main result of this section holds.
\begin{theorem}\label{chap2-condition-number}
For any given thresholds $\Theta_{E}, \Theta_{F} \ge 1$, the condition number of the adaptive BDDC preconditioned operator $\hat{G}$ satisfies
\begin{align*}
\kappa(\hat{G}) \le C \Theta,
\end{align*}
where $\Theta =\max\{\Theta_{E}, \Theta_{F}\}$, $C$ is a constant depending only on the number of common faces and common edges per subdomain and the number of subdomains sharing a common edge.
\end{theorem}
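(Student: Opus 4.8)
The plan is to assemble the theorem directly from the two eigenvalue bounds already established. First I would record that $\hat{G} = M_{BDDC}^{-1}\hat{S}$ is the product of two Hermitian positive definite operators on $\hat{W}$: the operator $\hat{S}$ is Hermitian positive definite by \eqref{chap2-def-operator-hat-S}, and $M_{BDDC}^{-1} = (I_{\Gamma}\tilde{D})\tilde{S}^{-1}(I_{\Gamma}\tilde{D})^H$ from \eqref{chap2-ZJ-Expression-MBDDC-Inv} is Hermitian positive definite because $\tilde{S}$ is Hermitian positive definite and $I_{\Gamma}\tilde{D}:\tilde{W}\to\hat{W}$ is surjective, the surjectivity being a consequence of the identity $I_{\Gamma}\tilde{D}\tilde{R}_{\Gamma} = I$ proved in Lemma \ref{chap2-lemma-5-1}. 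Hence $\hat{G}$ is self-adjoint and positive with respect to the inner product $(\hat{S}\cdot,\cdot)$; equivalently it is similar to the Hermitian positive definite operator $\hat{S}^{1/2}M_{BDDC}^{-1}\hat{S}^{1/2}$. Consequently the spectrum of $\hat{G}$ is real and positive, and $\kappa(\hat{G}) = \lambda_{\max}(\hat{G})/\lambda_{\min}(\hat{G})$.

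Next I would invoke Lemma \ref{chap2-theorem-5-1}, which gives $\lambda_{\min}(\hat{G}) \ge 1$, together with Lemma \ref{chap2-lemma-max}, which gives $\lambda_{\max}(\hat{G}) \le C\Theta$ with $\Theta = \max\{\Theta_E,\Theta_F\}$ and $C = 2C_{FE}^2$, where $C_{FE}$ depends only on the maximal number of common faces and common edges per subdomain and on the maximal number of subdomains sharing a common edge. Dividing the upper bound by the lower bound then yields $\kappa(\hat{G}) \le C\Theta$ with exactly the stated dependence of the constant, which is the assertion of the theorem.

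Since both ingredients are already in place, there is essentially no remaining obstacle in this last step; all the real work sits in Lemma \ref{chap2-lemma-max}. For orientation I would note where its constant originates: the chain \eqref{chap2-final-ineq-1} applies the elementary inequality $|\sum_{l=1}^J \balpha_l|_{a_r}^2 \le J\sum_{l=1}^J |\balpha_l|_{a_r}^2$ twice — once to split $(P_D\tilde{w})^{(r)}$ into its face part and edge part, and once inside the application of Lemma \ref{chap2-lemma-contral-iequ} — combined with the per-glob estimate \eqref{chap2-ZJ-Tphik-TDphik-Prop}; carrying these factors through the sums over subdomains $r$, over $k\in\mathcal{M}_X^{(r)}$, and over $s\in\mathcal{N}_{\mathcal{X}_k}\setminus\{r\}$ is precisely what produces the combinatorial constant $C_{FE}$ and the overall factor $2C_{FE}^2$. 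Nothing further needs to be checked, so the theorem follows at once by combining Lemma \ref{chap2-theorem-5-1} and Lemma \ref{chap2-lemma-max}.
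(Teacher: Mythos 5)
Your proposal is correct and follows exactly the route the paper takes: the theorem is obtained by combining the lower bound $\lambda_{\min}(\hat{G})\ge 1$ from Lemma \ref{chap2-theorem-5-1} with the upper bound $\lambda_{\max}(\hat{G})\le C\Theta$ from Lemma \ref{chap2-lemma-max}. Your additional remarks on the self-adjointness of $\hat{G}$ in the $(\hat{S}\cdot,\cdot)$ inner product and on the origin of the constant $C=2C_{FE}^2$ are accurate but merely make explicit what the paper leaves implicit.
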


\section{Numerical experiments}\setcounter{equation}{0}

In this section, numerical experiments are presented for solving Helmholtz equations in three dimensions.
Since the stiffness matrix of the PWLS system \eqref{chap2-ZJ-Schur-System} is Hermitian and positive definite, we will apply preconditioned conjugate gradient (PCG) algorithm to solve \eqref{chap2-ZJ-Schur-System},
and the iteration is stopped either the relative residual is less than $10^{-5}$ or the iteration counts are greater
than $100$. These algorithms are tested in a machine with Intel(R) Xeon(R) CPU E5-2650 v2 2.60 GHz and 96-GB memory.

In the following experiments, we select a benchmark problem to study the properties of the adaptive BDDC method.
We choose $\Omega$ as a unit cube, and adopt a uniform partition $\mathcal{T}_h$
for the domain as follows: $\Omega$ is divided into some cubic elements with the same size $h$,
where $h$ denotes the length of the longest edge of the elements.
In the following tables, $n$ denotes the number of subdomains in each direction, $m$ denotes
the number of complete elements in each direction of one subdomain, $p$ is the number of plane waves used in each element, Iter is the number of iterations needed in the PCG algorithm, $\lambda_{\min}$ and
$\lambda_{\max}$ separately denote the minimum and maximum eigenvalues of the preconditioned system,
cond is the condition number of the preconditioned system,
pnum is the number of total primal unknowns,
pnumF and pnumE are separately the number of primal unknowns on faces and edges, the average number of primal unknowns in each face or each edge are given in the parentheses. M1 and M2 separately denote the adaptive BDDC algorithm with deluxe scaling matrices \cite{DohrmannPechstein2012} and multiplicity scaling matrices \cite{RixenFarhat99:489}.

\begin{example}
\begin{equation}\label{example-1}
\left\{\begin{array}{ll}
-\Delta u - \kappa^2 u = 0, & \mbox{in}~\Omega,\\
\frac{\partial u}{\partial {\bf n}} + i \kappa u = g, & \mbox{on}~\partial \Omega,
\end{array}\right.
\end{equation}
where $\Omega = (0,1)\times(0,1)\times(0,1)$, and $g = i\kappa(1+{\bf v} \cdot {\bf n}e^{i\kappa {\bf v}_0 \cdot {\bf x}})$.

The analytic solution of the problem can be obtained in the close form as
$$u_{ex} ({\bf x}) = e^{i\kappa {\bf v}_0 \cdot {\bf x}},$$
where ${\bf v}_1 = (\tan(-\pi/10), 0, \tan(\pi/5))^T$, ${\bf v}_0 = {\bf v}_1/\|{\bf v}_1\|_2$.
\end{example}

%

In Table \ref{ex-table-1} and Table \ref{ex-table-1-1}, we study the convergence behavior and the changes in the scale of the coarse spaces with respect to the condition number indicators $\Theta_{F}$ and $\Theta_{E}$.

We set $\kappa = 8\pi$, $p = 18$, $n(m)=3(3)$, Table \ref{ex-table-1} summarizes the results of the two-level adaptive BDDC method
with economic (see \cite{KlawonnRadtke16:75}, where $\eta = h$) and noneconomic generalized eigenvalue problems for different choice of $\Theta_F$ and $\Theta_E$.

\begin{table}[H]
{\footnotesize\centering
\caption{The results for different choice of the tolerances $\Theta_F$ and $\Theta_E$ with economic-version and noneconomic-version two-level adaptive BDDC method.}
\label{ex-table-1}
\vskip 0.1cm
\begin{tabular}{{|c|c|c|c|c|c|c|c|c|c|}}\hline
                             &\multirow{2}{*}{method}     & \multicolumn{4}{c|}{noneconomic-version}   & \multicolumn{4}{c|}{economic-version($\eta = h$)}        \\\cline{3-10}
                             &       &Iter   & pnum   & pnumF   & pnumE        & Iter & pnum   & pnumF       &pnumE         \\\hline
$\Theta_F = 1 + log(m)$       &M1  & 7   & 3626   &1538  &1944 & 7    &3646    &1558  &1944   \\\cline{2-10}
$\Theta_E = 1 + log(m)$       &M2  & 9   & 8534   &6446  &1944 & 9    &8530    &6442  &1944   \\\hline
$\Theta_F = 1 + log(m)$       &M1  & 7   & 3526   &1538  &1844 & 7    &3542    &1558  &1840   \\\cline{2-10}
$\Theta_E = 4m$               &M2  & 11  & 8406   &6446  &1816 & 11   &8398    &6442  &1812   \\\hline
$\Theta_F = 1 + log(m)$       &M1  & 16  & 2086   &1538  &404  & 15   &1986    &1558  &284   \\\cline{2-10}
$\Theta_E = 10^3$             &M2  & 80  & 7070   &6446  &480  & 89   &6930    &6442  &344   \\\hline
$\Theta_F = 1 + log(m)$       &M1  & 16  & 2074   &1538  &392  & 16   &1946    &1558  &244   \\\cline{2-10}
$\Theta_E = 10^3log(m)$       &M2  & 102 & 7042   &6446  &452  & 111  &6890    &6442  &304   \\\hline
$\Theta_F = 4m$                &M1  & 24  & 1090   &542   &404  & 24   &974     &546   &284   \\\cline{2-10}
$\Theta_E = 10^3$              &M2  & 87  & 3264   &2640  &480  & 97   &3138    &2650  &344        \\\hline
\end{tabular}
}
\end{table}

From this table, we can see that for $\Theta_E = 1+log(m)$,
the dofs on edges are totally selected as the primal unknows,
and as $\Theta_E$ increase from $1+log(m)$ to $10^3$, the number of primal unknows on edges significantly reduced.
Similarly, the number of primal unknows on faces reduced as $\Theta_F$ increases.
We also note that, compared with M2, the iteration counts of M1 are not large and increase slowly when the condition number indicators (or tolerances) increase.
It worth pointing out that, when we use the economic-version BDDC algorithm, not only the iteration counts of the PCG algorithm dose not increases,
but in most cases the number of the primal unknowns has slightly decreased.

We set $p = 18$, $n(m)=3(3)$ and adopt deluxe scaling in our proposed adaptive BDDC algorithm with economic generalized eigenvalue problems, Table \ref{ex-table-1-1} shows the numerical results for different wave numbers and different condition number indicators $\Theta_{F}$ and $\Theta_{E}$.
\begin{table}[H]
\centering\caption{The results for different $\Theta_F, \Theta_E$ and different wave numbers.}
\label{ex-table-1-1}
\vskip 0.1cm
\begin{tabular}{{|c|c|c|c|c|c|c|c|c|}}\hline
\multirow{2}{*}{$\Theta_F, \Theta_E$}  &\multicolumn{4}{c|}{$8\pi$}     &\multicolumn{4}{c|}{$16\pi$} \\\cline{2-9}
                                      & pnumF  &pnumE  &cond  &Iter    & pnumF  &pnumE  &cond  &Iter \\\hline
1                                     & 7662   &1944   &1.03  &3       & 7976   &1944   &1.03  &3    \\
10                                    & 772    &1888   &4.12  &13      & 440    &1836   &2.39  &12   \\
$10^2$                                & 292    &1248   &7.06  &18      & 260    &640    &14.70 &23   \\
$10^3$                                & 260    &284    &24.09 &33      & 260    &224    &22.60 &26   \\\hline
\end{tabular}
\end{table}
As we can see in Table \ref{ex-table-1-1}, the number of primal unknowns on faces and edges decreases as the condition number indicators $\Theta_F, \Theta_E$ increase,
the condition number of the preconditioned system and the total number of PCG iterations increase as the condition number indicators  $\Theta_F, \Theta_E$ increase.

In the following experiments, we choose $\Theta_F = 4m, \Theta_E = 1000$ and adopt the economic-version.

To measure the accuracy of the numerical solution, we introduce a relative $L^2$-error as:
\begin{align*}
err = \frac{\|u_{ex} - u_{h}\|_{L^2(\Omega)}}{\|u_{ex}\|_{L^2(\Omega)}},
\end{align*}
and as the PWLS method has the ``wave number polution" phenomenon, we keep $p=28$ and decrease $h$ to control the relative error less than $10^{-2}$ in the next experiments.

\begin{table}[H]
{\footnotesize \centering\caption{The efficiency of the two-level adaptive BDDC algorithm with variable wave number $\kappa$}
\label{ex-table-2}\vskip 0.1cm
\begin{tabular}{{|c|c|c|c|c|c|c|c|c|c|}}\hline
$\kappa$                 &$N_h$      & err   & method  & $\lambda_{\min}$ & $\lambda_{\max}$ &pnum & pnumF  &pnumE  & Iter \\\hline
\multirow{2}{*}{$8\pi$}  &\multirow{2}{*}{$11^3$}  &\multirow{2}{*}{9.68E-03}&M1 &1.00  &6.07   &7644  &3873(26.89)	&3015(27.92)  &16  \\
                         &                         &                               &M2 &1.00  &90.41  &14571 &10251(71.18)	&3564(33.00)  &65\\\hline
\multirow{2}{*}{$10\pi$} &\multirow{2}{*}{$15^3$} &\multirow{2}{*}{8.23E-03} &M1 &1.00  &8.46   &10365 &5406(37.54)	&4203(38.92)  &19 \\
                         &                         &                               &M2 &1.00  &113.62 &25833 &19911(138.27)	&5166(47.83)  &73\\\hline
\multirow{2}{*}{$16\pi$} &\multirow{2}{*}{$27^3$} &\multirow{2}{*}{9.19E-03} &M1 &1.00  &12.13  &19236 &10686(74.21)	&7794(72.17)  &23  \\
                         &                         &                               &M2 &-     &-      &61952 &51260(355.97)	&9936(92.00)  &- \\\hline
\end{tabular}
}
\end{table}

The results listed in Table \ref{ex-table-2} show that the iteration counts are not large and increase slowly as the wave number increases,
i.e., the growth rates of the iteration numbers are much smaller than the growth rates of the scales of the discrete systems.
But the cost of keeping the number of iterations is that the size of the coarse space increases sharply with the increase of wave number.
Especially, when $\kappa = 16\pi$, due to the limitation of the computer's memory, the program of the two-level adaptive BDDC algorithm with multiplicity scaling matrices can not be calculated properly. From this point of view, it is better to adopt deluxe scaling matrices in the adaptive BDDC algorithm for the Helmholtz problem with large wave numbers.

Further, we also show the efficiency of the two-level adaptive BDDC algorithm with deluxe scaling matrices for a fixed number of subdomains and a fixed number of complete elements in each direction of one subdomain respectively.

\begin{table}[H]
\centering\caption{
The efficiency of the two-level adaptive BDDC algorithm with variable number of complete elements $m$ in each direction of one subdomain.
}
\label{ex-table-3}\vskip 0.1cm
\begin{tabular}{{|c|c|c|c|c|c|c|c|c|c|}}\hline
$m$	& pnum	  & pnumF	    & pnumE	    &$\lambda_{\min}$ & $\lambda_{\max}$	&Iter \\\hline
2	&7644	  &3873(26.89)	&3015(27.92)	&1.00             &6.07	                &16   \\
3	&10812	  &5682(39.46)	&4374(40.50)	&1.00             &8.72	                &19   \\
4	&14156	  &7694(53.43)	&5706(52.83)	&1.00             &8.80	                &19   \\\hline
\end{tabular}
\end{table}

\begin{table}[H]
\centering\caption{
The efficiency of the two-level adaptive BDDC algorithm with variable number of subdomains $n$ in each direction.
}
\label{ex-table-4}\vskip 0.1cm
\begin{tabular}{{|c|c|c|c|c|c|c|c|c|c|}}\hline
$n$	&pnum	  &pnumF	    &pnumE	    &$\lambda_{\min}$ & $\lambda_{\max}$	&Iter \\\hline
4	&3030     &1680(11.67)  &864(8.00)   	&1.00             &8.78	                &20   \\
5	&7732     &4228(14.09)  &2352(9.80)  	&1.00             &10.77	            &22   \\
6	&16010    &8460(15.67)  &5300(11.78)	&1.00             &11.95                &22   \\
7	&28470    &15114(17.14) &9468(12.52)	&1.00             &15.46	            &24   \\\hline
\end{tabular}
\end{table}

In Table \ref{ex-table-3}, we set the wave number $\kappa = 8\pi$, $p=28$ and the number of subdomains in each direction $n=4$.
The results show that the iteration numbers are mildly dependent on the mesh size,
the minimum eigenvalues of the preconditioned systems are larger than 1, and the maximum eigenvalues are mildly dependent on the mesh size.
In Table \ref{ex-table-4}, we set the wave number $\kappa = 8\pi$, $p=18$ and the number of complete elements $m=2$ in each direction of one subdomain, and we have the same experiment results, but it is worth pointing out that the number of the primal unknowns have
significantly increased as the number of subdomains increase.
Since the coarse matrix in this algorithm is complex and dense, and the direct solver is applied to the coarse problem,
it brings great challenges to the computer hardware, and needs more time and memory cost.

In the next experiments, we would like to test the efficiency of our multi-level adaptive BDDC algorithm with deluxe scaling
and economic generalized eigenvalue problems, where the wave number $\kappa = 8\pi$, $m=2$, the PCG algorithm stopped either the iteration counts are greater than 100 or the relative residual is reduced by the factor of $10^{-5}$ at level $0$ and $10^{-2}$ at other levels, and four
subdomains at the finer level are treated as a coarser subdomain.

\begin{table}[H]
{\footnotesize
\centering\caption{The efficiency of the multi-level adaptive BDDC algorithm with variable number of subdomains
}
\label{ex-table-5}\vskip 0.1cm
\begin{tabular}{{|c|c|c|c|c|c|c|c|c|c|}}\hline
$n$     &pnum	  &pnumF	    &pnumE	    &$\lambda_{\min}$ & $\lambda_{\max}$	&Iter\\\hline
3 level($p=18$)      &             &             &               &                 &                     &     \\
4/2     &3030/138          &1680/96      &864/24         &1.00             &8.78                 &20   \\
6/3     &16010/1574        &8460/1002    &5300/428       &1.00             &11.95                &22   \\
8/4	    &46746/5389	       &24696/3354   &15876/1549     &1.00             &17.08                &26   \\
10/5    &104418/13280      &54036/8176   &37260/3952     &1.00             &14.70                &25   \\\hline
4 level($p=15$)      &             &             &               &                 &                     &     \\
8/4/2   &34888/4411/309    &19110/2871/273   &10633/1135/21  &1.00         &25.92                &33   \\
12/6/3  &139282/19525/2324 &75757/12070/1836 &43560/5580/368 &1.00         &17.93                &28   \\\hline
\end{tabular}
}
\end{table}

The results are listed in Table \ref{ex-table-5}, especially, we list the number of subdomains in each direction ($n$), the total number of primal unknows (pnum), the total number of primal unknows on faces (pnumF), the total number of primal unknows on edges (pnumE) at each level. From this table, we can see that the PCG iteration number (or the condition number) slightly increases when using more levels (see the results of $n=8$ in the finest level) or increasing the number of subdomains in the finest level.
And in addition, it is worth point out that compared with the finest level, the number of dofs at the coarsest level is reduced significantly,
i.e., the multilevel algorithm is
effective for reducing the number of dofs at the coarse problem, and can be used to solve large wave number
problems efficiently.

\section{Conclusions}

In this paper,
by introducing some auxiliary spaces, dual-primal basis functions, and operators with essential properties, BDDC algorithms with adaptive primal unknowns are developed and analyzed for the PWLS discretizations of the three-dimensional
Helmholtz equations.
Since the dofs of the PWLS discretization are defined on
elements rather than vertices or edges, we introduce a special ``interface" and the corresponding sesquilinear form for
each subdomain.
The coarse components are obtained by solving two types of local generalized
eigenvalue problems for each common face and each common edge.
We prove that the
condition number of the two-level adaptive BDDC preconditioned system is bounded above by $C \Theta$,
where $C$ is a constant depending only on the number of common faces and common edges per subdomain and the number of subdomains sharing a common edge,
$\Theta$ is the maximum of $\Theta_F$ and $\Theta_E$.
Some technical approaches are proposed to improve the computing efficiency, such as choosing the appropriate threshold and adopting the economic generalized eigenvalue problems,
and multilevel algorithm is designed to resolve
the bottleneck of large scale coarse problem.
Numerical results are presented to verify the robustness and efficiency of
the proposed approaches.
Further, we will devote to the parallel implementation of our multi-level adaptive BDDC algorithms, and extend this algorithms to the non-homogeneous Helmholtz equations with constant or variable wave numbers.

\section*{Acknowledgements}

The authors wish to thank the anonymous referees for many
insightful comments which led to great improvement in the results and the presentation of the paper.

This work is supported by the
National Natural Science Foundation of China (Grant Nos. 11971414, 11671159,11201398),
Scientific Research Fund of Hunan Provincial Education Department (Grant No. 18B082),
Natural Science Foundation of Guangdong Province (Grant No. 2016A030313842),
Project funded by China Postdoctoral Science Foundation (Grant No. 2019M652925),
Characteristic Innovation Projects of Guangdong colleges and universities (Grant No. 2018KTSCX044) and General Project topic of Science and Technology in Guangzhou, China (Grant No. 201904010117).


\bibliographystyle{abbrv}

\end{document}